\definecolor{orange}{rgb}{1,0,0}
\definecolor{cadmiumgreen}{rgb}{0, 0.78, 0.05}
\newtheorem{theorem}{Theorem}[section]
\newtheorem*{theorem*}{Theorem}
\newtheorem{lemma}[theorem]{Lemma}
\newtheorem*{lemma*}{Lemma}
\newtheorem{corollary}[theorem]{Corollary}
\newtheorem{proposition}[theorem]{Proposition}
\newtheorem{defprop}[theorem]{Proposition-Definition}
\newtheorem{remark}[theorem]{Remark}
\newtheorem{definition}[theorem]{Definition}
\def\revddots{\mathinner{\mkern1mu\raise\p@
\vbox{\kern7\p@\hbox{.}}\mkern2mu
\raise4\p@\hbox{.}\mkern2mu\raise7\p@\hbox{.}\mkern1mu}}
\newcommand{\bgl}{\begin{equation}} %eine Gleichung mit Ziffer
\newcommand{\egl}{\end{equation}}
\newcommand{\bgloz}{\begin{equation*}} %eine Gleichung ohne Ziffer
\newcommand{\egloz}{\end{equation*}}
\newcommand{\bgln}{\begin{eqnarray}} %mehrere Gleichungen mit Ziffer
\newcommand{\egln}{\end{eqnarray}}
\newcommand{\bglnoz}{\begin{eqnarray*}} %mehrere Gleichungen ohne Ziffer
\newcommand{\eglnoz}{\end{eqnarray*}}
\newcommand{\btheo}{\begin{theorem}}
\newcommand{\etheo}{\end{theorem}}
\newcommand{\btheooz}{\begin{theorem*}}
\newcommand{\etheooz}{\end{theorem*}}
\newcommand{\blemma}{\begin{lemma}}
\newcommand{\elemma}{\end{lemma}}
\newcommand{\blemmaoz}{\begin{lemma*}}
\newcommand{\elemmaoz}{\end{lemma*}}
\newcommand{\bproof}{\begin{proof}}
\newcommand{\eproof}{\end{proof}}
\newcommand{\bbew}{\begin{beweis}}
\newcommand{\ebew}{\end{beweis}}
\newcommand{\bremark}{\begin{remark}\em}
\newcommand{\eremark}{\end{remark}}
\newcommand{\bdefin}{\begin{definition}}
\newcommand{\edefin}{\end{definition}}
\newcommand{\bprop}{\begin{proposition}}
\newcommand{\eprop}{\end{proposition}}
\newcommand{\bdefprop}{\begin{defprop}}
\newcommand{\edefprop}{\end{defprop}}
\newcommand{\bcor}{\begin{corollary}}
\newcommand{\ecor}{\end{corollary}}
\newcommand{\bfa}{\begin{cases}} %Fallunterscheidung
\newcommand{\efa}{\end{cases}}
\newcommand{\nn}{\par\vspace{-3mm}\noindent}
\newcommand{\cB}{\mathcal B}
\newcommand{\cE}{\mathcal E}
\newcommand{\cI}{\mathcal I}
\newcommand{\cK}{\mathcal K}
\newcommand{\cM}{\mathcal M}
\newcommand{\cO}{\mathcal O}
\def\Cz{\mathbb{C}}
\def\Nz{\mathbb{N}}
\def\Zz{\mathbb{Z}}
\def\1z{\mathbb{1}}
\newcommand{\lori}{\longrightarrow}
\newcommand{\ve}{\varepsilon}
\newcommand{\vp}{\varphi}
\def\SEMI{\mbox{$\times\kern-2pt\vrule height5pt width.6pt \kern3pt $}}
\newcommand{\id}{{\rm id}}
\newcommand{\alg}{{\rm alg}}
\newcommand{\Ad}{{\rm Ad\,}}
\newcommand{\nuc}{\mathrm{nuc}}
\begin{document}
\title[$KK$ with extra structures]{Generalized homomorphisms and $KK$ with extra structures}

\author{Joachim Cuntz}
\address{Mathematisches Institut\\
  Universit\"at M\"unster\\
  Ein\-stein\-str.\ 62\\
  48149 M\"unster\\
  Germany}
\email{cuntz@uni-muenster.de}

\author{James Gabe}
\address{Department of Mathematics and Computer Science\\
  University of Southern Denmark\\
  Campusvej 55\\
  DK-5230 Odense\\
  Denmark}
\email{gabe@imada.sdu.dk}

%\date{\today}

\begin{abstract}
  We develop the approach via quasihomomorphisms and the universal algebra $qA$ to Kasparov's $KK$-theory, so as to cover versions of $KK$ such as $KK^{nuc}$, $KK^G$ and ideal related $KK$-theory.
\end{abstract}
\thanks{2010 Mathematics Subject Classification. Primary 19K35, 46L80; Secondary 47L35, 19L47.}
\thanks{The first named author was supported by Deutsche Forschungsgemeinschaft (DFG) via Exzellenzstrategie des Bundes und der L\"ander EXC 2044 –390685587, Mathematik M\"unster: Dynamik–Geometrie–Struktur.\\ The second named
  author was supported by the IRFD grants 1054-00094B and
1026-00371B}

\maketitle
\section{Introduction}

Kasparov's $KK$-theory is a main tool in the theory of operator algebras and noncommutative geometry. It is based on a very flexible but not easy formalism developed by Kasparov. In \cite{CuGen} and \cite{CuKK} the first named author has introduced an alternative more algebraic approach based on quasihomomorphisms and the universal algebra $qA$ associated with an algebra $A$. In this picture elements of $KK(A,B)$ are represented by homomorphisms from $qA$ to $\cK\otimes B$ where $\cK$ denotes the standard algebra of compact operators on $\ell^2\Nz$. One merit of this approach is a simple and universal construction of the product in $KK$ from which in particular  associativity becomes very natural. Since many important $KK$-elements come naturally from quasihomomorphisms, at the same time it can be used to treat $KK$-elements that occur in `nature'. Note that there are possible definitions of $KK(A,B)$ that make the product and its associativity automatic but have the disadvantage that $KK$-elements appearing in applications never fit the definition naturally - take for instance the possible definition as homotopy classes of homomorphisms from $\cK\otimes qA$ to $\cK\otimes qB$. There also is the approach of \cite{CuLoc}, \cite{CuTh} which is based on the use of the universal algebra $qA$ too, and works also for Banach and locally convex algebras and in fact even much more general algebras \cite{CoTh},\cite{CMR}. The definition and especially the product however uses higher quasihomomorphisms (maps from $q^nA$ rather than from $qA$). In applications to C*-algebras e.g. for classification this is not good enough because there it is usually important that a $KK$-element can be represented by a prequasihomomorphism instead of a Kasparov-module.

One strength of Kasparov's formalism is the fact that by now it has been extended to define very useful versions of $KK$ for categories of C*-algebras with additional structure such as equivariant $KK$-theory \cite{KasInv}, $KK^{nuc}$ \cite{SkandKKnuc} or ideal related $KK$-theory \cite{Kir00}. In this article we adapt the formalism of \cite{CuKK} to allow for these additional structures. We will give definitions of the various $KK$-theories using the approach via the universal algebra $qA$ and establish the associative product in each case. In section \ref{suni} we will explain that our construction reproduces the $KK$-theories defined previously in the papers cited above. Moreover we will see there that in the case of equivariant and ideal related $KK$-theory we obtain a universal functor with the usual properties of split exactness, homotopy invariance and stability.\\
An nice feature of our approach is the fact that the ideal preserving or nuclearity condition on a homomorphism $\vp:qA\to B$ can be characterized by a simple criterion. In fact, these conditions can already be checked on the linear map $A\ni x\mapsto \vp(qx)$ (where $qx$ is one of the standard generators of $qA$).
This description of $KK^{nuc}$ will be used in upcoming work of the second named author \cite{CGSTW2} to simplify functoriality of this functor similar to how this formalism was used in \cite[Appendix B.1]{CGSTW1}.

The most established and probably the most important of the $KK$-theories we discuss is the equivariant theory $KK^G$. This version of $KK$ has been discussed on the basis of the $qA$ approach by Ralf Meyer in \cite{MeyEqui}. In fact one basic idea in his approach appears also in our discussion. We mention however that Meyer does not touch the Kasparov product at all. Using Meyer's result we get a new description of the product in Kasparov's $KK^G$.
\\
For the construction of the product we will not use Kasparov's technical theorem as in \cite{KasInv} or Pedersen's derivation lifting theorem as in \cite{CuKK} but Thomsen's somewhat simpler noncommutative Tietze extension theorem \cite[1.1.26]{JeTh}. In the equivariant case we will also need a new equivariant version of this theorem which we prove in section \ref{s0}.

\section{Preliminaries}\label{s0}
Notation: In the following, homomorphisms between C*-algebras will always be assumed to be  *-homomorphisms. By $\cK$ we denote the standard algebra of compact operators on $\ell^2 \Nz$. There is a natural isomorphism $\cK\cong \cK\otimes \cK$. A C*-algebra $A$ is called stable if $A\cong \cK\otimes A$. Given a C*-algebra $A$ we denote by $\cM (A)$ its multiplier algebra. If $\vp:A\to B$ is a $\sigma$-unital homomorphism between C*-algebras, we denote by $\vp^\circ$ its extension to a homomorphism $\cM (A)\to \cM (B)$.

Let $A$ be a C*-algebra. We denote by $QA$ the free product $A\star A$ and by $\iota, \bar{\iota}$ the two natural inclusions of $A$ into $QA=A\star A$. We denote by $qA$ the kernel of the natural map $A\star A\to A$ that identifies the two copies $\iota (A)$ and $\bar{\iota} (A)$ of $A$. Then $qA$ is the closed two-sided ideal in $QA$ that is generated by the elements $qx=\iota (x) - \bar{\iota} (x),\,x\in A$.\\
There is the natural evaluation map $\pi_A: qA \to A$ given by the restriction to $qA$ of the map $\id\star 0:QA\to A$ that is the identity on the first copy of $A$ and zero on the second one.
\bprop\label{gen}
For $x,y\in A$  one has the identity $$q(xy)=\iota (x)q(y) + q(x)\bar{\iota} (y)=\bar{\iota} (x)q(y) + q(x)\iota (y)$$
Finite sums of elements of the form $\iota (x_0)qx_1\ldots qx_n$ and $qx_1, \ldots qx_n$ or of the form $qx_1\ldots qx_n\iota (x_0)$ and $qx_1, \ldots qx_n$ are dense in $qA$. In particular $qA$ is generated as a closed left or right ideal in $qA$ by the elements $qx$, $x\in A$.  \eprop
\bproof
The identity for $q(xy)$ is trivially checked. The other statements are consequences (for the assertion on the generation as a closed left or right ideal note that $\iota (y)qx$ is the limit of $\iota (y)u_\lambda qx$ for an approximate unit $(u_\lambda )$ in $qA$).\eproof
As in \cite{CuKK} we define a prequasihomomorphism between two C*-algebras $A$ and $B$ to be a diagram of the form $$A \quad \stackrel{\vp,\bar{\vp}}{\rightrightarrows}\quad \cE \rhd J \stackrel{\mu}{\to} B$$
i.e. two homomorphisms $\vp,\bar{\vp}$ from $A$ to a C*-algebra $\cE$ that contains an ideal $J$, with the condition that $\vp (x)-\bar{\vp} (x)\in J$ for all $x\in A$ and finally a homomorphism $\mu : J\to B$. The pair $(\vp,\bar{\vp})$ induces a homomorphism $QA\to \cE$ by mapping the two copies of $A$ via $\vp,\bar{\vp}$. This homomorphism maps the ideal $qA$ to the ideal $J$. Thus, after composing with $\mu$, every such prequasihomomorphism from $A$ to $B$ induces naturally a homomorphism $q(\vp,\bar{\vp}):qA\to B$. Conversely, if $\psi: qA \to B$ is a homomorphism, then we get a prequasihomomorphism by choosing $\cE =\cM(\psi (qA)), \,J=\psi(qA)$ and $\vp=\psi^\circ \iota,\,\bar{\vp}=\psi^\circ \bar{\iota}$ as well as the inclusion $\mu: \psi(qA) \hookrightarrow B$.

In this paper we will also have to use an iteration of the $qA$ construction. We will write $Q^2A$ for the free product $Q(QA)=QA\star QA$ and $\eta,\bar{\eta}$ for the two natural embeddings of $QA$ into $Q^2A$. We now denote by $\ve, \bar{\ve}$ the two embeddings $A\to QA$ and get four embeddings $\eta\ve, \eta\bar{\ve}, \bar{\eta}\ve, \bar{\eta}\bar{\ve}$ of $A$ to $Q^2A$. We have the ideal $qA$ generated by the elements $\ve (x)-\bar{\ve} (x), \, x\in A$ in $QA$ and the ideal $q^2A$ generated by $\eta (z)- \bar{\eta} (z)$, $z\in qA$ in $Q(qA)$.

In Section \ref{sequi} we will use the following equivariant version of Thomsen's noncommutative Tietze extension theorem which we prove here. Recall that when $G$ is a locally compact group, a $G$-$C^\ast$-algebra $A$ is a $C^\ast$-algebra with a point-norm continuous action $\alpha$ of $G$ on $A$. This action extends to a point-strictly continuous action $\alpha^\circ$ on the multiplier algebra $\mathcal M(A)$, where we remark that each automorphism $\alpha_g^\circ$ for $g\in G$ is strictly continuous on bounded sets. To simplify notation, we will sometimes write $g\cdot a$ instead of $\alpha_g(a)$ for $a\in A$ and $g\in G$ (or instead of $\alpha^\circ_g(a)$ if $a\in \mathcal M(A)$).

\bprop\label{eqT} Let $G$ be a locally compact $\sigma$-compact group, let $0\to J\to A\stackrel{\pi}{\to} B\to 0$
be an extension of $\sigma$-unital $G$-C*-algebras, and let $X\subset \cM (A)$
be a norm-separable self-adjoint subspace. Let $\pi^\circ : \cM(A) \to \cM(B)$ be the
induced homomorphism. For every $z$ in the commutator $\cM (B)\cap \pi^\circ (X)'$ of $\pi^\circ (X)$ in $\cM (B)$, such that $g\cdot z=z$
for all $g \in G$ there exists $y \in \cM(A)$ such that $\pi^\circ (y)=z,\, [y,X]\subseteq J,\,g\cdot y-y\in J$ for all $g\in G$ and $G\ni g \mapsto g\cdot y$ is norm-continuous.
\eprop
\bproof
We may assume without loss of generality that $z$ is a positive contraction. Let $h\in A$ be strictly positive, let $\mathcal F \subset X$ be a compact subset of contractions with dense span,\footnote{If $(x_n)_{n\in \mathbb N}$ is a dense sequence in the unit ball of $X$ one could pick $\mathcal F = \{ \tfrac{1}{n} x_n : n\in \mathbb N\}\cup \{0\}$.} and let $H_1 \subseteq H_2 \subseteq \dots \subseteq G$ be compact neighbourhoods of the identity such that $G = \bigcup H_n$. Since $B$ is also $\sigma$-unital, we apply \cite[Lemma 1.4]{KasInv} and pick a (positive, increasing, contractive) approximate identity $(e_n)_{n\in \mathbb N}$ for $B$ such that
\begin{eqnarray}
\| (1 - e_n) z^{1/2} \pi(h) \| & \leq & 4^{-n} \label{eq:i3} \\
\sup_{x\in \mathcal F} \| \pi^\circ (x)  e_n  -  e_n  \pi^\circ (x) \| & \leq & 4^{-n} \label{eq:i1} \\
\sup_{g\in H_n} \| g \cdot e_n - e_n \| & \leq & 4^{-n} \label{eq:i4}
\end{eqnarray}
for $n\in \mathbb N$. To ease notation let $e_0 = 0$.  We will recursively construct positive contractions $0 = y_0 \leq y_1 \leq y_2 \leq \dots $ in $A$ such that for $n\in \mathbb N$
\begin{eqnarray}
\pi(y_n) & = & z^{1/2} e_n z^{1/2} \label{eq:1}  \\
\| (y_{n+1} - y_n) h\| &\leq & 2^{-n}  \label{eq:2} \\
\sup_{x\in \mathcal F} \| [y_{n+1}- y_n, x]\| & \leq & 2^{-n}  \label{eq:3} \\
\sup_{g\in H_n} \| g\cdot (y_{n+1} - y_n) - (y_{n+1} - y_n)\| & \leq & 2^{-n}. \label{eq:4}
\end{eqnarray}
Letting $y_0 = 0$, suppose we have constructed $y_0\leq \dots \leq y_n$ as above. We will explain how to construct $y_{n+1}$.

Since  $z^{1/2} (e_{n+1} - e_n) z^{1/2} \leq 1 - z^{1/2} e_n z^{1/2}$, we apply \cite[Proposition 1.5.10]{Ped} to pick $c \in A$ such that $\pi(c) = z^{1/2} (e_{n+1} - e_n) z^{1/2}$ and $0\leq c \leq 1-y_n$ in $\tilde A$.  Again using \cite[Lemma 1.4]{KasInv} we let $(v_k)_{k\in \mathbb N}$ be an approximate identity in $J$ which is quasi-central relative to $\{c, y_n, h \} \cup \mathcal F$ and such that $\lim_{k\to \infty} \sup_{g\in H_n} \| g \cdot v_k - v_k \| =0$.  Let $y_{n+1}^{(k)} := y_n +  c^{1/2} (1- v_k) c^{1/2}$. We will show that we can pick $y_{n+1} = y_{n+1}^{(k)}$ for sufficiently large $k$.

That \eqref{eq:1}, \eqref{eq:2}, and \eqref{eq:3} are satisfied is exactly as in the proof of \cite{JeTh}, so it remains to show \eqref{eq:4}.
For this we compute
\begin{eqnarray*}
&& \limsup_{k\to \infty}\sup_{g\in H_{n}} \| g\cdot (y_{n+1}^{(k)}- y_n) - (y_{n+1}^{(k)}-y_n)\| \\
&=&  \limsup_{k\to \infty} \sup_{g\in H_{n}} \| g\cdot ((1-v_k) c) - (1-v_k) c\|  \\
&=&  \limsup_{k\to \infty} \sup_{g\in H_{n}} \| (1-v_k) (g\cdot c - c)\| \\
&=& \sup_{g\in H_{n}} \| g\cdot (z^{1/2} (e_{n+1} - e_n) z^{1/2}) - z^{1/2} (e_{n+1} - e_n) z^{1/2}\|  \\
&=& \sup_{g\in H_{n}} \| z^{1/2} (g\cdot (e_{n+1} - e_n) - (e_{n+1} - e_n) ) z^{1/2}\| \\
&\stackrel{\eqref{eq:i4}}{\leq }&  2^{-n}.
\end{eqnarray*}
Hence we may define $y_{n+1}  = y_{n+1}^{(k)}$ for large $k$ so that it satisfies \eqref{eq:1}--\eqref{eq:4}, so we obtain our desired sequence $(y_m)_{m\in \mathbb N}$.

By \eqref{eq:2} it follows that $(y_n)_n$ converges strictly to a positive contraction $y\in \mathcal M(A)$. Since $\pi^\circ$ is strictly continuous on bounded sets, it follows from \eqref{eq:1} that $\pi^\circ (y) = z$ (since $z$ is the strict limit of $z^{1/2} e_n z^{1/2}$). For $x\in \mathcal F$ we have by \eqref{eq:3} that $[y_n, x]$ norm-converges to an element in $A$, so that $[y,x] \in A$. Moreover,
\[
\pi([y,x]) = \lim_{n \to \infty} \pi^\circ ([y_n, x]) \stackrel{\eqref{eq:1}}{=} \lim_{n\to \infty} z^{1/2} [e_n, \pi^\circ(x)] z^{1/2} \stackrel{\eqref{eq:i1}}{=} 0
\]
so that $[y,x] \in J$ for all $x\in \mathcal F$. Hence $[y,x] \in J$ for all $x\in \overline{\mathrm{span}} \mathcal F = X$.

As the $G$-action on $\mathcal M(A)$ is pointwise strictly continuous, it follows that $g\cdot y$ is the strict limit of $(g\cdot y_n)_{n\in \mathbb N}$ for any $g\in G$. By \eqref{eq:4}, $(g\cdot y_n - y_n)_{n\in \mathbb N}$ converges in $A$ as $n\to \infty$ for every $g\in G$. Hence $g\cdot y-y\in A$. Moreover,
\begin{eqnarray*}
\pi(g\cdot y-y) &=& \lim_{n\to \infty} \pi^\circ(g\cdot y_n - y_n) \\
&\stackrel{\eqref{eq:1}}{=}& \lim_{n\to \infty} g\cdot (z^{1/2}e_n z^{1/2}) - z^{1/2} e_n z^{1/2} \\
&=& \lim_{n\to \infty} z^{1/2} (g\cdot e_n - e_n)z^{1/2} \\
&\stackrel{\eqref{eq:i4}}{=}&  0.
\end{eqnarray*}
Hence $g\cdot y-y\in J$ for all $g\in G$.

Finally, given $\epsilon>0$, pick $N \in \mathbb N$ such that $\sum_{k=N}^\infty 2^{-n} < \epsilon$. Choose an open neighbourhood $U \subseteq H_N \subseteq G$ of the identity such that $\sup_{g\in U} \| g\cdot y_N - y_N\| < \epsilon$. Then
\begin{eqnarray*}
\sup_{g\in U} \| g\cdot y - y \| &=& \sup_{g\in U} \| \sum_{k=N}^\infty (g\cdot (y_{k+1} - y_k)  - (y_{k+1} - y_k)) + g\cdot y_N - y_N \| \\
&\stackrel{\eqref{eq:4}}{\leq}& \epsilon + \sup_{g\in U}\| g\cdot y_N - y_N \| \\
&<& 2\epsilon.
\end{eqnarray*}
Hence $G\ni g \mapsto g\cdot y\in \mathcal M(A)$ is norm-continuous.
\eproof
\section{The product in $KK$}\label{s3}
Given two homomorphisms $\vp,\psi:X\to Y$ between C*-algebras we denote by $\vp\oplus\psi$ the homomorphism $$x\mapsto\;\scriptsize{\left(\begin{matrix}
 \vp (x) & 0\\
 0 & \psi (x)
\end{matrix}\right)}$$ from $X$ to $M_2(Y)$. Following \cite{CuKK} we define
\bdefin
Let $A$, $B$ be C*-algebras and $qA$ as in Section \ref{s0}. We define $KK(A,B)$ as the set of homotopy classes of homomorphisms from $qA$ to $\cK\otimes B$.
\edefin
The set $KK(A,B)$ becomes an abelian group with the operation $\oplus$ that assigns to two homotopy classes $[\vp],[\psi]$ of homomorphisms $\vp,\psi:qA\to \cK\otimes B$ the homotopy class $[\vp\oplus \psi ]$ (using an isomorphism $M_2(\cK) \cong \cK$ to identify $M_2(\cK\otimes B)\cong \cK\otimes B$, which is well-defined since such an isomorphism is unique up to homotopy). In \cite{CuGen} it was checked that this definition of $KK(A,B)$ is equivalent to the one by Kasparov. We recapitulate now the construction in \cite{CuKK} of the product $KK(A,B)\times KK(B,C)\to KK (A,C)$. It is based on a functorial map $\vp_A: qA\to M_2(q^2A)$ (which is in fact - up to stabilization by the $2\times 2$- matrices $M_2$ - a homotopy equivalence). Since versions of this map and of its properties will be used in each of the subsequent sections on $KK$ with additional structure we include complete proofs. We take this opportunity to include more details on the proofs and to arrange the arguments given in \cite{CuKK} in a slightly different way.\\
To prove the existence of the map $\vp_A$ we will use Proposition \ref{eqT} with $A$ in place of $X$. Since $X$ in \ref{eqT} has to be separable we will assume in this section and in later sections where we discuss the product of $KK(A,B)$ and $KK(B,C)$ to $KK(A,C)$ with extra structure that $A$ is separable.

Given a C*-algebra $A$, we use the four embeddings $\eta\ve, \eta\bar{\ve}, \bar{\eta}\ve, \bar{\eta}\bar{\ve}$ of $A$ to $Q^2A$ from section \ref{s0}. Consider the C*-algebra $R$ generated by the matrices $$\left(\begin{matrix}
 R_1 & R_1R_2\\
 R_2R_1 & R_2
\end{matrix}\right)$$
where $R_1=\eta(qA)$, $R_2=\bar{\eta}(qA)$. Consider also the C*-algebra $D$ generated by matrices of the form
$$D = \left(\begin{matrix}
 \eta\ve (x) & 0\\
 0 & \bar{\eta}\ve (x)
\end{matrix}\right)\quad x\in A$$
Then $R$ is a subalgebra of $M_2(QqA)$ where $QqA$ is the C*-subalgebra of $Q^2A$ generated by $\eta (qA)$ and $\bar{\eta} (qA)$. Let $J= R\cap M_2(q^2A)$. Since $q^2 A$ is an ideal in $QqA$ this is an ideal in $R$. One also clearly has $DR, RD\subset R$. Thus $R$ is an ideal in $R+\!D$ and $J$ is also an ideal of $R+\!D$ (we think of all these algebras as subalgebras of $M_2(Q^2 A)$).

Because $\eta (qA)/q^2 A = \bar{\eta} (qA)/q^2 A\cong qA$, the quotient $R/J$ is isomorphic to $M_2(qA)$. Moreover $(R+\!D)/J$ is isomorphic to the subalgebra of $M_2(Q(A))$ generated by $M_2(qA)$ together with the matrices
$$\left(\begin{matrix}
 \iota (x) & 0\\
 0 & \iota (x)
\end{matrix}\right) \quad x\in A$$

If $A$ is separable we can use Thomsen's noncommutative Tietze extension theorem \cite[1.1.26]{JeTh} (see also Proposition \ref{eqT}) and lift the multiplier
$$
\left(\begin{matrix}
 0 & 1\\
 1 & 0
\end{matrix}\right)
$$
of $R/J$
to a self-adjoint multiplier $S$ of $R$ that commutes mod $J$ with $D$.

We can now set
$F=e^{\frac{\pi i}{2} S}$ and define the automorphism $\sigma$ of $\cM (J)$ by $\Ad F$.

Consider the homomorphisms $A\to \cM (J)$ given by
$$
h_1= \left(\begin{matrix}
 \eta\ve & 0\\
 0 & \bar{\eta}\bar{\ve}
\end{matrix}\right)
,\qquad
h_2 =\left(\begin{matrix}
 \eta\bar{\ve} & 0\\
 0 & \bar{\eta}\ve
\end{matrix}
\right)
$$
In the following we use the notation $\oplus$ introduced at the beginning of the section. Thus $h_1=\eta\ve\oplus \bar{\eta}\bar{\ve}$ and $h_2=\eta\bar{\ve}\oplus\bar{\eta}\ve$.\\
\bdefin We define the homomorphism $\vp_A : qA\to J \subset M_2(q^2A)$ by the prequasihomomorphism given by the pair of homomorphisms  $(h_1,\sigma h_2)$   (compare \cite{CuKK}, p.39), i.e. $\vp_A=q(h_1,\sigma h_2)$.\edefin
To check that the difference of $h_1$ and $\sigma h_2$ maps to $J$ recall that by definition $\sigma$ fixes $d(x)=\eta\ve (x)\oplus\bar{\eta}\ve (x)$ mod $J$ for each $x\in A$ and that $h_2(x)= d(x) - \eta (q(x))\oplus 0$. The term $\eta q(x)\oplus 0$ is moved by $\sigma$ to $0\oplus\bar{\eta} q(x)$ mod $J$ (note that $\eta q(x)-\bar{\eta} q(x)\in q^2A$). Since $\bar{\eta}\ve(x)- \bar{\eta} (qx)=\bar{\eta}\bar{\ve} (x)$ we get that $\sigma h_2(x)=h_1(x)$ mod $J$.

Note the $\vp_A$ is unique up to homotopy. In fact, if we picked a different operator $S_1 \in \mathcal M(R)$ instead of $S$ as above, and define $S_t = (1-t)S + tS_1$ and $\sigma_t = \Ad e^{\tfrac{\pi i}{2} S_t}$, then $q(h_1, \sigma_t h_2)$ defines a homotopy from $q(h_1, \sigma h_2)$ to  $q(h_1, \sigma_1 h_2)$.

\subsection{The Kasparov product via the universal map $\vp_A$}\label{sub prod}
Once the map $\vp_A$ is constructed we can define the product  $KK(A,B)\times KK(B,C)\to KK (A,C)$ as follows. \\
Let $\alpha :qA\to \cK\otimes B$ and  $\beta :qB\to \cK\otimes C$ represent elements $a\in KK(A,B)$ and $b\in KK(B,C)$ respectively. Since $q$ is a functor, we can form the homomorphism $q(\alpha):q^2 A \to q(\cK\otimes B)$. The pair of homomorphisms $(\id_\cK \otimes \iota, \id_\cK \otimes \bar{\iota})$ gives a natural map $\mu:q(\cK\otimes B)\to \cK\otimes qB$. The product of $a$ and $b$ is then represented by the following composition
\bgl\label{forprod} qA\, \stackrel{\vp_A}{\to}\, q^2 A \stackrel{q(\alpha)}{\to} q(\cK\otimes B)
\stackrel{\mu}{\to} \cK\otimes qB\, \stackrel{\id_\cK\otimes \beta}{\to}\, \cK\otimes\cK\otimes C\cong \cK\otimes C\egl
For simplicity we have left out the tensor product by the $2\times 2$-matrices $M_2$ which can be absorbed in the tensor product by $\cK$. Here and later we sometimes extend homomorphisms, such as $q(\alpha )$ here, tacitly to matrices or stabilizations. We denote the resulting homomorphism $qA\to \cK\otimes C$ in \eqref{forprod} by $\beta\,\sharp\, \alpha$.\\
This description of the product will be used in the subsequent sections in different versions.
\bremark\label{remrest}
(a) If $\alpha$ maps $qA$ to $B\subset \cK\otimes B$ then we can omit the map $\mu$ and the stabilization of $\beta$. We get that $\beta\,\sharp\,\alpha$ then is represented by $\beta q(\alpha)\vp_A$. The same formula applies if $\alpha$ maps $qA$ to $\cK\otimes B$ and $B\cong \cK\otimes B$.\\
(b) Assume that $B$ and $C$ are stable and let $\alpha :qA\to B$ and $\beta :qB\to C$ represent elements of $KK(A,B)$ and $KK(B,C)$. Denote by $\underline{B}$ the C*-subalgebra of $B$ generated by $\alpha (qA)$ and by $j_B$ the inclusion $\underline{B} \hookrightarrow B$. Let $\underline{\alpha} :qA\to \underline{B}$ and $\underline{\beta} = \beta\circ q(j_B) :q\underline{B} \to C$ denote the corestriction and restriction of $\alpha$ and $\beta$. Then we have $\underline{\beta}\,\sharp\, \underline{\alpha} = \beta\sharp \alpha$. In fact $\beta q(\alpha)\vp_A$ factors as $\beta\circ q(j_B) q(\underline{\alpha})\vp_A$ and the second expression represents $\underline{\beta}\,\sharp\, \underline{\alpha}$.\\
Instead of $\underline{B}$ we can just as well consider the hereditary subalgebra $B_0$ of $B$ generated by $\underline{B}$ and define $\alpha_0,\beta_0$ in analogy to $\underline{\alpha},\underline{\beta}$. We get the formula $\beta_0\,\sharp\, \alpha_0 = \beta\sharp \alpha$. We will use this setting below.
\eremark

\subsection{Associativity}\label{ass} The important point that gives associativity of the product is the existence of a homotopy inverse (up to tensoring by $M_2$) for $\vp_A$. It is given by  $\pi_{qA}:q^2 A\to qA$. We define $\pi_{qA}:QqA\to qA$ as the homomorphism that annihilates $\bar{\eta}(qA)$ in the free product $QqA=\eta qA\star \bar{\eta} qA$, and also as in Section \ref{s0} its restriction to $q^2A \subset QqA$.
\bprop\label{prop pi}
There is a continuous family of homomorphisms $\psi_t : q^2A\to M_2(q^2A), \,t\in [0,1]$ such that $\psi_0=\id_{q^2 A} \oplus 0$ and $\psi_1=\vp_A\pi_{qA} $.\\
There also is a continuous family of homomorphisms $\lambda_t: qA\to R\subset M_2(QqA)$ such that $\pi_{qA}\lambda_0 = \id_{qA}\oplus 0$ and $\pi_{qA}\lambda_1 = \pi_{qA}\vp_A$ (here and later we extend $\pi_{qA}: q^2A\to qA$ tacitly to a homomorphism $M_2 (q^2 A)\to M_2 (qA)$ between $2\times 2$-matrices).
\eprop
\bproof
Let $S$ be as above a lift of the multiplier given  on $R/J$ by the matrix
$$M=\left(\scriptsize{\begin{matrix}
 0 & 1\\
 1 & 0
\end{matrix}}\right)$$ to a multiplier of $R$ and denote by $S'$  the multiplier of $M_2(QqA)$ given by the same matrix $M$. For each $t\in [0,1]$ we let $\sigma_t$ denote the automorphism of $R$ given by $\Ad e^{\frac{\pi i}{2} St}$ and $\tau_t$ the automorphism of $M_2(QqA)$ given by $\Ad e^{\frac{\pi i}{2} S't}$.\\
Since $\sigma_t$ fixes the algebra $D$ from above pointwise mod $J$, the homomorphisms $ \eta \ve\oplus \bar{\eta} \bar{\ve} $ and $\sigma_t( \eta \bar{\ve}\oplus \bar{\eta}\ve)$ map $A$ to $D+R$ and their difference maps into the ideal $R$ of $D+R$. Therefore this difference defines, for each $t\in [0,1]$ a homomorphism $\alpha_t$ from $qA$ to $R$.
\\
We also define a homomorphism $\bar{\alpha}_t:qA\to M_2(QqA)$ by the pair of homomorphisms $\left( \bar{\eta}\ve\oplus\bar{\eta}\bar{\ve},\tau_t(\bar{\eta}\bar{\ve}\oplus\bar{\eta}\ve )\right)$ from $A$ to $M_2(Q^2A)$.
Let us denote the quotient map $QqA\to QqA/q^2A$ by $x\mapsto x^\bullet$. As already remarked above, we have $R^\bullet\cong M_2(qA)$ and we also have $(M_2(\bar{\eta} qA))^\bullet\cong M_2(qA)$. Under the quotient map $R$ becomes equal to $M_2(\bar{\eta} qA)$, $\sigma_t$ becomes equal to $\tau_t$ and therefore $\alpha_t(x)^\bullet=\bar{\alpha}_t(x)^\bullet$ for all $x\in qA$.\\
It follows that the pair $(\alpha_t,\bar{\alpha}_t)$ defines a continuous family of homomorphisms $\psi_t:q^2A \to M_2(q^2A)$. These homomorphisms are restrictions of the maps $Q^2A\to M_2 (Q^2A)$ that map $\eta\ve(x)$ and $\eta\bar{\ve}(x)$ to $ \eta\ve\oplus\bar{\eta}\bar{\ve},\,\sigma_t(\eta\bar{\ve}\oplus\bar{\eta}\ve )$ and $\bar{\eta}\ve(x),\bar{\eta}\bar{\ve}(x)$ to $\bar{\eta}\ve\oplus\bar{\eta}\bar{\ve},\,\tau_t(\bar{\eta}\bar{\ve}\oplus\bar{\eta}\ve)$, respectively.\\
For $t=0$ one easily checks  for $z\in qA$ that $\alpha_0(z)= \eta (z)\oplus\bar{\eta} (\gamma (z))$ and $\bar{\alpha}_0(z)= \bar{\eta} (z)\oplus \bar{\eta} (\gamma (z))$ where $\gamma$ denotes the restriction of the automorphism of $QA$ that interchanges $\iota$ and $\bar{\iota}$. Thus the pair $(\alpha_0 , \bar{\alpha}_0 )$ induces the homomorphism $ \id_{q^2A} \oplus 0: q^2A \to M_2(q^2 A)$.\\
For $t=1$, $\alpha_1:qA\to M_2(q^2A)$ is $\vp_A$ and $\bar{\alpha}_1$ is 0. This shows that $\psi_1 =\vp_A\pi_{qA}$.\\
It remains to show that $\pi_{qA}\vp_A$ is homotopic to $\id_{qA}\oplus 0$. The map $\pi_{qA}:q^2A\to qA$ is the restriction of the homomorphism $QqA\to qA$ that annihilates $\bar{\eta}(qA)$. Consider $\lambda_t:qA\to R\subset M_2(QqA)$ defined by the pair $\left(\eta\ve\oplus\bar{\eta}\bar{\ve},\sigma_t(\eta\bar{\ve}\oplus\bar{\eta}\ve)\right)$. We find that $\pi_{qA}\lambda_0 =\id_{qA}\oplus 0$ and $\pi_{qA}\lambda_1 =\pi_{qA}\vp_A$.
\eproof

\bremark\label{remfunct}
The map $\vp_A$ is functorial (up to stable homotopy) in the following sense:  If $\alpha:qA\to qB$ is a homomorphism between separable C*-algebras, then after stabilizing $q^2B$ the homomorphisms $q(\alpha)\vp_A$ and  $\vp_B \alpha$ are homotopic. \\
In fact, let $\sim$ denote stable homotopy equivalence. Using Proposition \ref{prop pi} to note that $\pi_{qA} \vp_A \sim \id_{qA}$ and $\vp_B \pi_{qB} \sim \id_{q^2 B}$, as well as the observation $\alpha \pi_{qA} = \pi_{qB} q(\alpha)$, we get
\[
q(\alpha) \vp_A \sim \vp_B \pi_{qB} q(\alpha) \vp_A = \vp_B \alpha \pi_{qA} \vp_A \sim \vp_B \alpha.
\]
\eremark

Given C*-algebras $X$ and $Y$ we use the standard notation $[X,Y]$ to denote the set of homotopy classes of homomorphisms from $X$ to $Y$. Thus we have $KK(X,Y) = [qX,\cK\otimes Y]$. Given $\alpha: qX\to \cK\otimes Y$ and $\beta: qY\to \cK\otimes Z$ we write $\beta\,\sharp\,\alpha$ for $(\id_\cK\otimes \beta ) \mu q(\alpha)\vp_A$, see formula \eqref{forprod}. Thus the homotopy class $[\beta\,\sharp\, \alpha]$ represents the Kasparov product of $[\alpha]$ and $[\beta]$. One way to prove the associativity of the Kasparov product consists in identifying $KK(X,Y)=[qX,\cK\otimes Y]$ with $[\cK\otimes qX,\cK\otimes qY]$ using Proposition \ref{prop pi} and to check that, under this identification the Kasparov product induced by $\sharp$ corresponds to the composition product of homomorphisms and thus is associative. This observation was stated explicitly for the first time by Skandalis in \cite{SkanKK}. We have the following proposition.

In the following we consider $qA$ as a subalgebra of $\cK \otimes qA$ as the $(1,1)$-corner embedding.
\bprop\label{prop q2}
The map $[\alpha]\mapsto [\bar{\alpha}]$ where $\bar{\alpha}=(\id_\cK\otimes \pi_B)\alpha|_{qA}$ is an isomorphism from $[\cK\otimes qA,\cK\otimes qB]$ to $[qA,\cK\otimes B]$ with inverse given by the map $[\beta]\mapsto [\beta']$ where $\beta'=\mu (\id_{\cK} \otimes q(\beta)\vp_A)$ with $\mu$ as in \eqref{forprod}.
It is multiplicative in the sense that it maps $[\beta\alpha]$ to $[\bar{\beta}\,\sharp\,\bar{\alpha}]$. In particular the product on $KK$ induced by $\sharp$ is associative.\eprop
For the proof of the proposition we need the following lemma.
\blemma\label{lemma rot}
The natural maps $q(\pi_A)$ and $\pi_{qA}$ from $q^2A$ to $qA$ are homotopic as maps to $M_2(qA)$.
\elemma
\bproof
Both homomorphisms from $q^2A$ to $qB$ are restrictions of homomorphisms from $Q^2A$ to $QB$. The first one maps $\eta\ve (x), \eta \bar{\ve} (x), \bar{\eta}\ve (x), \bar{\eta}\bar{\ve} (x)$ to $\iota (x), \bar{\iota} (x),\, 0,\, 0 $ and the second one to $\iota (x), \, 0, \bar{\iota} (x), \, 0$. The homotopy between the two is obtained by rotating in the homomorphism $q^2 A\to M_2(qA)$ which is the restriction of the homomorphism $Q^2A\to M_2(QA)$ mapping the generators to
$$\scriptsize{\left(\begin{matrix}
 \iota (x) & 0\\
 0 & 0
\end{matrix}\right)
\quad \left(\begin{matrix}
 \bar{\iota} (x) & 0\\
 0 & 0
\end{matrix}\right)\quad
\left(\begin{matrix}
  \bar{\iota} (x) & 0\\
 0 & 0
\end{matrix}\right) \quad
\left(\begin{matrix}
  \bar{\iota} (x) & 0\\
 0 & 0
\end{matrix}\right)}$$
the second and fourth term to $\scriptsize{\left(\begin{matrix}
  0 & 0\\
 0 & \bar{\iota}(x)
\end{matrix}\right)}$.
\eproof

\bproof[Proof of Proposition \ref{prop q2}]
We use $\sim$ to mean homotopic. Up to stabilisations we have
$$(\bar{\alpha})'= \mu q((\id_\cK\otimes\pi_B) \alpha|_{qA})\vp_A \stackrel{\ref{lemma rot}}{\sim} (\id_\cK\otimes\pi_{qB}) \mu q(\alpha|_{qA}) \vp_A = \pi_{\cK \otimes qB} q(\alpha|_{qA}) \vp_A  = \alpha|_{qA}  \pi_{qA}\vp_A
$$
and this is homotopic to $\alpha$ by Proposition \ref{prop pi}. Also
$$\overline{\beta'} = (\id_\cK\otimes\pi_B )\mu q(\beta)\vp_A= \beta \pi_{qA}\vp_A$$ which also is homotopic to $\beta$ by \ref{prop pi} (in both cases we have used the obvious identity $\pi_Y q(\psi) = \psi \,\pi_X: qX\to Y$ for a homomophism $\psi :X\to Y$). \\
Concerning multiplicativity we get (omitting here for clarity the stabilizations and $\mu$) for $\alpha:qA\to qB$ and $\beta: qB\to qC$ that
\bglnoz \overline{\beta\alpha}=\pi_{C}\beta\alpha \sim \pi_C\beta \alpha \pi_{qA} \vp_A \stackrel{\alpha\pi_{qA}= \pi_{qB}q(\alpha)}{=}
\pi_C\beta\pi_{qB}q(\alpha)\vp_A \\ \stackrel{\ref{lemma rot}}{\sim}\quad \pi_C\,\beta q(\pi_B)q(\alpha)\vp_A = \pi_C \beta q(\pi_B \alpha)\vp_A
= \bar{\beta}\, \sharp\, \bar{\alpha}.
\eglnoz
\eproof
\subsection{Another description of the product}
For a prequasihomomorphism $A\rightrightarrows E \rhd J$ given by the pair of homomorphisms $\alpha,\bar{\alpha}: A\to E$ we write as above $q(\alpha,\bar{\alpha})$ for the corresponding map $qA\to J$ (i.e. the restriction of $\alpha\star \bar{\alpha}$ from $QA$ to $qA$).\\
For the product of $KK$-elements $\alpha :qA\to \cK\otimes B$ and $\beta :qB\to \cK\otimes C$ only the restriction of $\beta$ to $qB_0$ matters, where $B_0$ is the hereditary subalgebra of $\cK\otimes B$, generated by the image $\alpha (qA)$,  see Remark \ref{remrest} (b). This observation leads to an alternative description of the product which we will also use to discuss associativity of the product in $KK^{nuc}$ in section \ref{s2}. In fact, for the purposes of this section it would suffice to use the smaller C*-subalgebra $\underline{B}$ of $\cK\otimes B$ generated by $\alpha (qA)$ instead of $B_0$. But we will apply the following discussion to the product in $KK^{nuc}$ in section \ref{s2} and there the choice of the hereditary subalgebra will be important.\\
With $B_0$ as above we define $\alpha_E,\bar{\alpha}_E:A \to \cM(B_0)\oplus A$ by $\alpha_E(x)= (\alpha^\circ\iota_A (x)\,,\, x)$, $\bar{\alpha}_E(x)= (\alpha^\circ\bar{\iota}_A (x)\,,\, x)$ and set $E_\alpha = C^*(B_0,\alpha_E(A),\bar{\alpha}_E(A))$. This gives an exact sequence
$0 \to B_0\to E_\alpha \stackrel{p}{\to} A \to 0$ with two splittings given by $\alpha_E,\bar{\alpha}_E : A\to E_\alpha$. Note that the prequasihomomorphism $(\alpha_E,\bar{\alpha}_E)$ represents $\alpha:qA\to B_0$ i.e. $\alpha = q(\alpha_E,\bar{\alpha}_E)$.
\blemma\label{lemext}
Let $\alpha$, $E_\alpha$ and $B_0$ be as above and $\beta:q(B_0)\to \cK\otimes C$. Let $j_E:B_0\to E_\alpha$ be the inclusion.
There is $\beta':q(E_\alpha)\to M_2(\beta(qB_0))$ such that $\beta$ is homotopic to $\beta' q(j_E)$.
\elemma
\bproof Let $\kappa_\alpha : qE_\alpha \to B_0$ be the homomorphism defined by the prequasihomomorphism $(\id_{E_\alpha},\alpha_E \circ p)$ (recall that $p:E_\alpha \to A$ is the quotient map) and set $\beta'= \beta \,\sharp \, \kappa_\alpha = \beta q(\kappa_\alpha)\vp_{E_\alpha}$. It is immediately checked that $\kappa_\alpha q(j_E)=\pi_{B_0}$ (in fact $\kappa_\alpha (\iota (x)q(y))=xy$ and $\kappa_\alpha (\bar{\iota} (x)q(y))=0$ for $x,y\in B_0$). Using the homotopy $\vp_{E_\alpha}q(j_E)\sim q^2(j_E)\vp_{B_0}$ from Remark \ref{remfunct} we get (assuming that $B$ is stable) the following homotopy
$$\beta' q(j_E) = (\beta\sharp \kappa_\alpha)q(j_E) = \beta\, q(\kappa_\alpha )\vp_{E_\alpha} q(j_E) \stackrel{\ref{remfunct}}{\sim}
\beta\, q(\kappa_\alpha )q^2(j_E)\vp_{B_0}=\beta q(\pi_{B_0}) \vp_{B_0} \stackrel{\ref{ass}}{\sim} \,\beta
$$
\eproof
Given a homomorphism $\mu : qA\to \cK\otimes B$, we denote by $\breve{\mu}$ the composition $\mu\delta$ of $\mu$ with the symmetry $\delta$ of $qA$ that exchanges the two copies of $A$. Then $\breve{\mu}$ is an additive homotopy inverse to $\mu$, i.e. we have $\mu\oplus \breve{\mu}\sim 0$ (we can rotate $\iota (x)\oplus \bar{\iota}(x)$ to $\bar{\iota} (x)\oplus\iota (x)$ in $2\times 2$-matrices).\\
Note that, if $\nu$ is a second additive homotopy inverse to $\mu$, then $\nu$ is homotopic to $\breve{\mu}$ in matrices (because $\nu \sim \nu\oplus\mu\oplus \breve{\mu}\sim 0\oplus 0\oplus \breve{\mu}$).
\bprop\label{propomega}
Let $\alpha,\beta,E_\alpha,B_0$ be as above and assume that $\beta':qE_\alpha \to \cK\otimes C$ extends $\beta$ up to homotopy as in \ref{lemext}.
If we let $C_0$ denote the hereditary subalgebra of $\cK\otimes C$ generated by $\beta(qE_\alpha)$, we get two homomorphisms $\beta'_E,\bar{\beta}'_E: E_\alpha \to E_{\beta'}$ which we can compose with $\alpha_E,\bar{\alpha}_E:A\to E_\alpha$.\\
 The homomorphism $\beta q(\alpha):q^2A\to C_0\subset \cK\otimes C$ is homotopic to $\omega q(\pi_A)$ where $\omega :qA\to C_0\subset\cK\otimes C$ is given by $\omega =q(\beta_E'\alpha_E\oplus\bar{\beta}_E'\bar{\alpha}_E \, ,\, \bar{\beta}_E'\alpha_E\oplus\beta_E\bar{\alpha}_E )$.\\
\eprop
\bproof
The homomorphism $\alpha = q(\alpha_E,\bar{\alpha}_E):qA\to B_0$ extends to the homomorphism $\alpha_E\star\bar{\alpha}_E$ from $QA$ to $E_\alpha$. As a homomorphism to $M_2(E_\alpha)$ this extended map is homotopic to $(\alpha_E\oplus 0)\star (0\oplus\bar{\alpha}_E)$. The restriction of the latter map to $qA$, which we denote by $\alpha^\oplus$, is described by $\alpha^\oplus =\alpha_E\pi_A\oplus \bar{\alpha}_E \breve{\pi}_A$. We have
$$\beta q(\alpha) \sim \beta'q(\alpha) \sim \beta' q(\alpha^\oplus)\sim \beta' q(\alpha_E \pi_A)\oplus \beta' q(\bar{\alpha}_E \breve{\pi}_A)
$$
where we have used that $\beta'$ composed with a direct sum is in $2\times 2$-matrices homotopic to the direct sum of the two compositions. By the uniqueness of the additive homotopy inverse we have that $\beta' q(\bar{\alpha}_E\breve{\pi}_A)\sim \breve{\beta}' q(\bar{\alpha}_E\pi_A)$. The result follows since $\beta' = q(\beta'_E , \bar \beta'_E)$.
\eproof
\bcor\label{coromega}
Let $\alpha,\beta,E_\alpha,B_0$ be as above and assume that $\beta$ extends up to homotopy to $\beta':qE_\alpha \to \cK\otimes C$. Then the $KK$-product $\beta\,\sharp\,\alpha$ is represented by the homomorphism $\omega:qA\to M_2(C_0)\subset \cK\otimes C$ given by $$\omega =q(\beta_E'\alpha_E\oplus\bar{\beta}_E'\bar{\alpha}_E \, ,\, \bar{\beta}_E'\alpha_E\oplus\beta'_E\bar{\alpha}_E ).$$
\ecor
\bproof
By Proposition \ref{propomega}, Proposition \ref{prop pi} and Lemma \ref{lemma rot} we have $$\beta\sharp\alpha\stackrel{\ref{remrest}}{\sim} \beta q(\alpha)\vp_A\stackrel{\ref{propomega}}{\sim} \omega q(\pi_A)\vp_A \stackrel{\ref{prop pi}}{\sim} \omega .$$
\eproof
Note that, for the formula for $\beta\sharp\alpha$ in Corollary \ref{coromega} we don't need the universal map $\vp_A$ in full but only the product $\beta \,\sharp \, \kappa_\alpha$. One could base an alternative construction of the product in $KK$ by reducing it to the special case of the product by $\kappa_\alpha$.
\subsection{Another proof for associativity}\label{sua} We follow here the discussion in Section 4 of \cite{CuGen}. Assume that we have elements in $KK(A,B)$, $KK(B,C)$, $KK(C,D)$ represented by homomorphisms $\alpha: qA\to \cK\otimes B$, $\beta: qB \to \cK\otimes C$, $\gamma: qC\to \cK\otimes D$. We define successively first $E_\alpha\supset B_0$ and $\alpha_E,\bar{\alpha}_E:A\to E_\alpha$ as above, then $\beta':qE_\alpha \to \cK\otimes C$ such that the restriction of $\beta'$ to $qB_0$ is homotopic to $\beta$. We let $C_0$ denote the hereditary subalgebra of $\cK\otimes C$ generated by $\beta'(qE_\alpha)$. Then we define $E_{\beta'}$ as before and get homomorphisms $\beta'_E,\bar{\beta}'_E:E_\alpha\to E_{\beta'}$. We then take $\gamma':qE_{\beta'} \to \cK\otimes D$ such that its restriction to $qC_0$ is homotopic to $\gamma$ and get homomorphisms $\gamma'_E,\bar{\gamma}'_E:E_{\beta'}\to E_{\gamma'}$.

We can now apply Proposition \ref{propomega} to determine the two products $\gamma'\,\sharp\, (\beta' \sharp \alpha)$ and $(\gamma'\sharp\beta' )\, \sharp\, \alpha$. They will be homotopic to $\gamma\sharp(\beta\sharp\alpha)$ and $(\gamma\sharp\beta)\sharp\alpha$. By Remark \ref{remrest} and Corollary \ref{coromega} the previous products can be described as $\gamma'\sharp \omega_1$ and $\omega_2\sharp \alpha$ with
\bglnoz
\omega_1
 =q(\beta_E'\alpha_E\oplus\bar{\beta}_E'\bar{\alpha}_E \, ,\, \bar{\beta}_E'\alpha_E\oplus\beta'_E\bar{\alpha}_E )\\
 \omega_2 =q(\gamma_E'\beta'_E\oplus\bar{\gamma}_E'\bar{\beta}'_E \, ,\, \bar{\gamma}_E'\beta'_E\oplus\gamma_E\bar{\beta}'_E )
\eglnoz
We can now apply Proposition \ref{propomega} to both products. By the special form of $\omega_1$, the homomorphisms $\gamma'_E,\bar{\gamma}_E'$ can be composed with the homomomorphisms occuring in the two components of $\omega_1$. Therefore $\gamma'$ extends to $E_{\omega_1}$ and we are in the situation of \ref{propomega}. Second, the two homomorphisms defining $\omega_2$ can be composed with $\alpha_E,\bar{\alpha}_E$ and therefore $\omega_2$ extends to $E_\alpha$. When we apply Proposition \ref{propomega} to $\gamma'\,\sharp\, (\beta' \sharp \alpha)$ and $(\gamma'\sharp\beta' )\, \sharp\, \alpha$ and use the special form of $\omega_1,\omega_2$ we find that in both cases the triple product is given by
$$q(\gamma'_E\beta'_E\alpha_E\oplus\bar{\gamma}'_E\bar{\beta}'_E\alpha_E\oplus
\gamma'_E\bar{\beta}'_E\bar{\alpha}_E\oplus\bar{\gamma}'_E\beta'_E\bar{\alpha}_E \,,\,
\bar{\gamma}'_E\beta'_E\alpha_E\oplus\gamma'_E\bar{\beta}'_E\alpha_E\oplus
\bar{\gamma}'_E\bar{\beta}'_E \bar{\alpha}_E\oplus\gamma'_E\beta'_E\bar{\alpha}_E)
$$
\section{The ideal related case}\label{id}
All ideals in C*-algebras in this section will be closed and two-sided.
\bdefin
Let $X$ be a topological space and $\cO (X)$ its lattice of open subsets. An action of $X$ on a C*-algebra $A$ with ideal lattice $\cI (A)$ is an order preserving map $ \cO (X)\ni U\mapsto A(U) \in \cI (A)$.
\edefin
Let $A,B$ be C*-algebras with an action of $X$.\nn
A homomorphism (or also a linear map) $\psi : A\to B$ is said to be $X$-equivariant if $\psi$ maps $A(U)$ to $B(U)$ for each $U\in \cO(X)$.\nn
 A homomorphism $\vp$ from $qA$ to $B$ is said to be weakly $X$-equivariant, if the maps $A\ni x\mapsto \vp (\iota (x)z), x\mapsto \vp (\bar{\iota} (x)z)$ are $X$-equivariant for each $z\in qA$.\nn
We say that $\vp: qA\to B$ is $q_X$-equivariant if the map $A\ni x\mapsto \vp(qx)$ is $X$-equivariant.\nn
Finally, given $X$ and a C*-algebra $A$ with an action of $X$, we can define actions of $X$ on $QA$ and $qA$ by letting $QA(U)$ and $qA(U)$ be the closed ideals generated by $Q(A(U))$ in $QA$ and by $Q(A(U))qA+ + qA\,Q(A(U))$ in $qA$, respectively (these are the kernels of the natural maps $QA\to Q(A/A(U))$ and $qA\to q(A/A(U))$). We denote $QA,qA$ with these actions by $Q_XA,q_XA$. Then
$$0\to q_XA \to Q_XA\to A\to 0$$ is an $X$-equivariant exact sequence with equivariant splitting $\iota: A\to Q_XA$.
\bprop Let $A,B$ be C*-algebras with an action of $X$ and $\vp$ a homomorphism $qA\to B$. The following are equivalent
\begin{itemize}
  \item $\vp$ is  weakly $X$-equivariant
  \item $\vp$ is  $q_X$-equivariant
  \item $\vp$ is  $X$-equivariant as a homomorphism $q_XA\to B$
\end{itemize}
\eprop
\bproof
Assume that $\vp$ is $q_X$-equivariant. By Proposition \ref{gen}, $qA$ is the closed span of elements $qy \, w$ for $y\in A$ and $w\in qA$. Then $\vp (\iota(x)qy\, w) = \vp (q(xy)w)-\vp (qx\,\bar{\iota}(y)w)$ is in $B(U)$ whenever $x$ is in $A(U)$ for all $y\in A$,  $w\in qA$. Similarly for $\vp (\bar \iota (x) qy \, w)$, which shows that $\vp$ is weakly $X$-equivariant.\\
Conversely, assume that $\vp$ is weakly $X$-equivariant. Let $x\in A(U)$ and $(u_\lambda )$ an approximate unit for $qA$. Then
$\vp (qx) = \lim_\lambda \vp (qx\, u_\lambda)  = \lim_\lambda\vp( (\iota (x)-\bar{\iota} (x))u_\lambda ) \in B(U)$.\\
If $\vp$ is weakly $X$-equivariant then $\vp(qA \,\iota (x)qA)$ and $\vp(qA\, \bar{\iota} (x)qA)$ are contained in $B(U)$ for all $x\in A(U)$ and thus, by definition of $q_XA(U)$ we get that $\vp (q_X A(U))\subset B(U)$.\\
Finally, if $\vp:q_XA\to B$ is $X$-equivariant, then $\vp (Q(A(U))qA)\subset B(U)$ which means that $\vp$ is weakly $X$-equivariant.
\eproof
\bdefin\label{dKKX}
Let $A,B$ be C*-algebras with an action of $X$. We define $KK(X;A,B)$ as the set of homotopy classes of weakly $X$-equivariant homomorphisms (or equivalently of $q_X$-equivariant morphisms) $qA\to \cK\otimes B$ (with homotopy in the category of such morphisms).\\
Equivalently this is the set of equivariant homotopy classes of $X$-equivariant homomorphisms $q_XA\to \cK\otimes B$.
\edefin
In the $X$-equivariant case the construction of the product actually carries over directly from section \ref{s3}. We can apply the arguments from there basically verbatim to $q_XA$ in place of $qA$ because all the maps and homotopies occuring in the discussion are naturally $X$-equivariant. In particular, the automorphism $\sigma$ used in the construction of $\vp_A$ is inner and therefore respects ideals and is $X$-equivariant. This in turn implies that $\vp_A$ also is $X$-equivariant as a map from $q_XA$ to $M_2(q_X^2A)$ with $q_X^2A=q_X(q_XA)$. Moreover, the homotopies used in the proofs of Propositions \ref{prop pi} and \ref{prop q2} are manifestly $X$-equivariant. We obtain
\bprop
Let $A$, $B$, $C$ be C*-algebras with an action of the topological space $X$. There is a natural bilinear and associative product $KK(X;A,B)\times KK(X;B,C)\to KK(X;A,C)$ which extends the composition product of $X$-equivariant homomorphisms.
\eprop

\section{$KK^{nuc}$ via the $qA$ formalism}\label{s2}
We start with a discussion of nuclear and weakly nuclear linear maps between C*-algebras. While nuclearity is most often studied in the context of completely positive maps, Pisier considered the case for more general linear maps in \cite[Chapter 12]{Pis}. Since we think that these notions have some independent interest we do this in more detail than what is actually needed for our purposes.
\bdefin\label{dnuc} Let $\rho \colon A \to B$ be a linear map between C*-algebras. We let $\|\rho\|_{\nuc}$ (the nuclear norm) denote the infimum over all $K\geq 0$ for which
\[
\rho\otimes \id \colon A\otimes _{\alg}D \to  B\otimes _{\max}D
\]
 is bounded  by $K$ for all C*-algebras $D$, if we equip $A\otimes _{\alg}D$ with  the minimal $C^\ast$-tensor norm. We say that $\rho$ is \emph{nuclear} if $\|\rho\|_{\nuc}$ is finite.
\edefin

In comparison, a linear map $\phi \colon A \to B$ between $C^\ast$-algebras is \emph{completely bounded} (resp.~\emph{weakly decomposable}\footnote{This name is motivated by the result from \cite[Chapter 14]{Pis} (which is due to Kirchberg) where this definition is shown to be equivalent to the map $\phi \colon A \to B \subseteq B^{\ast \ast}$ being decomposable, i.e.~a linear combination of completely positive maps.}) if there is a constant $K$ such that the map $\phi \otimes \id \colon A \otimes_{\alg} D \to B \otimes_{\alg} D$ is bounded in norm by $K$ when both tensor products are equipped with the minimal (resp.~maximal) $C^\ast$-tensor product.

Since it suffices to check complete boundedness for $D$ being matrix algebras, it follows that weakly decomposable maps are completely bounded.

Note that if $\rho \colon A\to B$ is nuclear (or weakly decomposable) and $\rho$ takes values in a $C^\ast$-subalgebra $B_0 \subseteq B$, the corestriction $\rho|^{B_0}$ is not necessarily nuclear (or weakly decomposable) since the map $B_0 \otimes_{\max} D \to B \otimes_{\max} D$ is not necessarily faithful. However, the map $B_0 \otimes_{\max} D \to B \otimes_{\max} D$ is faithful if $B_0 $ is a hereditary $C^\ast$-algebra so in that case $\rho|^{B_0}$ is still nuclear (or weakly decomposable). This explains why we often consider hereditary $C^\ast$-subalgebras, instead of just ordinary subalgebras, in the theory below.

If $E$ is a $C^\ast$-algebra with closed ideal $B$, a linear map $\psi \colon A\to E$ is called \emph{weakly nuclear} (relative to $B$) if $\psi b \colon A \to B$ (i.e.~the map $x \mapsto \psi(x)b$) is nuclear for all $b\in B$. We address in Remark \ref{r:weaklynucclassic} why this notion agrees with the more traditional notion of weak nuclearity.

Here are some easy observations on nuclear linear maps. If $X$ is a C*-subalgebra of a C*-algebra $Y$, we denote in the following by $\overline{X}^Y$ the hereditary subalgebra $\overline{XYX}$ of $Y$ generated by $X$.

\begin{lemma}\label{l:nuc}
Let $A,B,C,D$ be $C^\ast$-algebras.
\begin{enumerate}
\item\label{l:nuc-pointnorm} For a fixed $K\geq 0$, the set of linear maps $\rho \colon A \to B$ with $\| \rho\|_\nuc \leq K$ is closed in the point-norm topology.

\item\label{l:nuc-Banach} The set of nuclear linear maps $A\to B$ is a Banach space with respect to the nuclear norm.

\item\label{l:nuc-tensor} If $\rho \colon A \to B$ is nuclear and $D$ is a nuclear $C^\ast$-algebra, then $\id_D \otimes \rho$ extends canonically to a nuclear map $D \otimes A \to D \otimes B$.

\item\label{l:nuc-cp} If $\rho \colon A \to B$ is completely positive and nuclear then $\| \rho \|_{\nuc} = \| \rho\|$.

\item\label{l:nuc-comp} If $\phi \colon A\to B$, $\rho \colon B \to C$ and $\psi \colon C \to D$ are linear maps such that $\phi$ is completely bounded, $\rho$ is nuclear, and $\psi$ is weakly decomposable, then $\psi \rho \phi$ is nuclear.

\item\label{l:nuc-psib} If $\psi \colon A \to E$ is a homomorphism with an ideal $B\lhd E$, and if $b\in B$ such that $\psi b$ is nuclear, then $\| \psi b \|_{\nuc} \leq \| b\|$.
%\[
%A \ni x \mapsto \psi(x)b \in B
%\]
%is nuclear, then the nuclear norm of this map is bounded by $\| b\|$.

\item \label{l:nuc-ideal} If $\psi \colon A \to E$ is a homomorphism with an ideal $B\lhd E$, and if $X\subseteq B$ is a subset such that $B$ is generated as a closed right ideal by $X$, then $\psi$ is weakly nuclear relative to $B$ provided $\psi b$ is nuclear for all $b\in X$.
\end{enumerate}
\end{lemma}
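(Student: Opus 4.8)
The plan is to prove the seven items in the stated order, since the later ones build on the earlier ones, and to isolate one general fact about $C^\ast$-tensor norms that does the real work in items (4) and (6). Call it $(\star)$: \emph{if $\rho\colon A\to B$ is a linear map for which $\rho\otimes\id_D\colon A\otimes_{\min}D\to B\otimes_{\max}D$ is bounded, then this map has exactly the same norm as $\rho\otimes\id_D\colon A\otimes_{\max}D\to B\otimes_{\max}D$.} To see this, let $\nu_D\colon A\otimes_{\max}D\twoheadrightarrow A\otimes_{\min}D$ be the canonical quotient $\ast$-homomorphism; on algebraic tensors the max-to-max map equals (min-to-max map)$\circ\,\nu_D$, hence it is bounded and annihilates $\ker\nu_D$; the map it induces on $(A\otimes_{\max}D)/\ker\nu_D\cong A\otimes_{\min}D$ is precisely the min-to-max map, and passing to a $C^\ast$-quotient does not change the norm of a bounded map killing the ideal, so both norms agree.

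Items (1)--(3) are routine. For (1): if $\rho_i\to\rho$ in point-norm with $\|\rho_i\|_{\nuc}\le K$, then for a finite algebraic tensor $\xi=\sum_k a_k\otimes d_k$ the sum $\sum_k\rho_i(a_k)\otimes d_k$ converges in $B\otimes_{\max}D$ to $\sum_k\rho(a_k)\otimes d_k$, so $\|(\rho\otimes\id_D)\xi\|_{\max}\le K\|\xi\|_{\min}$, and density of the algebraic tensor product gives $\|\rho\|_{\nuc}\le K$. For (2): subadditivity and homogeneity of $\|\cdot\|_{\nuc}$ are immediate, $\|\rho\|_{\nuc}=0$ forces $\rho=0$ (take $D=\Cz$), and completeness follows from $\|\cdot\|\le\|\cdot\|_{\nuc}$ (a $\|\cdot\|_{\nuc}$-Cauchy sequence has a point-norm limit $\rho$) together with (1) applied to the differences. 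For (3): using associativity of $\otimes_{\min}$, the identity $D\otimes_{\max}(-)=D\otimes_{\min}(-)$ for nuclear $D$, the bound $\|\id_D\otimes_{\min}T\|\le\|T\|_{\mathrm{cb}}$, and the observation that $\|\rho\|_{\nuc}\le K$ forces $\rho\otimes\id_E\colon A\otimes_{\min}E\to B\otimes_{\max}E$ to be completely bounded with cb-norm $\le K$ (amplify $E$ by matrix algebras), one rewrites $(\id_D\otimes\rho)\otimes\id_E$ as $\id_D\otimes_{\min}(\rho\otimes\id_E)$ between the appropriate iterated tensor products, reading off that the map is bounded (hence extends canonically) with $\|\id_D\otimes\rho\|_{\nuc}\le\|\rho\|_{\nuc}$.

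For (4): $\|\rho\|\le\|\rho\|_{\nuc}$ is trivial, and conversely $\rho\otimes_{\max}\id_D\colon A\otimes_{\max}D\to B\otimes_{\max}D$ is completely positive, so its norm is $\lim_{\lambda,\mu}\|\rho(e_\lambda)\otimes f_\mu\|=\|\rho\|$ along approximate units; by $(\star)$ the min-to-max map has the same norm, giving $\|\rho\|_{\nuc}=\|\rho\|$. For (5): the map $(\psi\rho\phi)\otimes\id_F$ factors as
\[
A\otimes_{\min}F\xrightarrow{\phi\otimes\id}B\otimes_{\min}F\xrightarrow{\rho\otimes\id}C\otimes_{\max}F\xrightarrow{\psi\otimes\id}D\otimes_{\max}F,
\]
with the three factors bounded by $\|\phi\|_{\mathrm{cb}}$, by $\|\rho\|_{\nuc}$, and by the weak-decomposability constant of $\psi$, respectively, so the composite is bounded for every $F$. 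For (6): by $(\star)$ it suffices to bound $(\psi b)\otimes_{\max}\id_D\colon A\otimes_{\max}D\to B\otimes_{\max}D$ by $\|b\|$, and this holds unconditionally: composing with the isometric inclusion $B\otimes_{\max}D\hookrightarrow E\otimes_{\max}D$ (valid since $B$ is an ideal in $E$), the map becomes $z\mapsto(\psi\otimes_{\max}\id_D)(z)\,(b\otimes 1)$, i.e. a $\ast$-homomorphism followed by right multiplication by the multiplier $b\otimes 1$, of norm $\|b\|$.

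Finally (7): since $B$ is the closed right ideal generated by $X$, the linear span of $\{xe : x\in X,\ e\in E\}\cup X$ is dense in $B$. For $x\in X$, $e\in E$ one has $\psi(\cdot)(xe)=R_e\circ(\psi x)$, where $\psi x\colon A\to B$ is nuclear by hypothesis and $R_e\colon B\to B$ (right multiplication by $e$) is weakly decomposable, being right multiplication by the multiplier $e\otimes 1$ on $B\otimes_{\max}D$; hence $\psi(xe)$ is nuclear by (5), as is $\psi x$. By (2), $\psi b$ is nuclear for every $b$ in that dense span, and for general $b\in B$, choosing $b_n\to b$ in the span we get $\psi b_n\to\psi b$ in point-norm with $\sup_n\|\psi b_n\|_{\nuc}\le\sup_n\|b_n\|<\infty$ by (6), so (1) gives $\|\psi b\|_{\nuc}<\infty$; thus $\psi$ is weakly nuclear relative to $B$. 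The only genuinely non-formal point is $(\star)$, together with the small package of standard tensor facts it and the other items rely on --- associativity and exactness properties of $\otimes_{\min}$, the isometric behaviour of $\otimes_{\max}$ on ideal inclusions, and the compatibility of cb-norms with minimal tensoring; once these are granted, each item is a short argument.
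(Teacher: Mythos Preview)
Your proof is correct. The main divergence from the paper is the isolation of the quotient-norm observation $(\star)$ and its use as the engine behind items (4) and (6). The paper handles (4) by simply citing it as classical (Brown--Ozawa, Theorem 3.5.3), and proves (6) by a completely positive trick: it sets $\theta(x)=b^*\psi(x)b$, observes that $\theta$ is both cp and nuclear (being $b^*\cdot(\psi b)$), applies (4) to get $\|\theta\|_{\nuc}=\|\theta\|\le\|b\|^2$, and then computes $\|(\psi b\otimes\id_D)(x)\|_{\max}^2=\|(\theta\otimes\id_D)(x^*x)\|_{\max}\le\|b\|^2\|x\|_{\min}^2$. Your route instead reduces (6) to bounding the max-to-max map, which you do by factoring it as $(\psi\otimes_{\max}\id_D)$ followed by right multiplication by $b\otimes 1$, using that $B\otimes_{\max}D\hookrightarrow E\otimes_{\max}D$ is isometric for an ideal $B\lhd E$. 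The paper's argument stays entirely within the cp framework and avoids appealing to ideal-exactness of the max tensor product; your argument is in a sense more direct and shows that the bound $\|(\psi b)\otimes_{\max}\id_D\|\le\|b\|$ holds with no nuclearity hypothesis at all, the nuclearity being needed only to invoke $(\star)$. For the remaining items the two treatments are essentially the same --- the paper declares (1), (2), (5) immediate and dispatches (7) in one line from (2) and (6), which is exactly your argument with less detail.
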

\begin{proof}
\eqref{l:nuc-pointnorm}, \eqref{l:nuc-Banach}, and \eqref{l:nuc-comp} are immediate to verify, while \eqref{l:nuc-cp} is classical, see for instance \cite[Theorem 3.5.3]{BO-book}.

\eqref{l:nuc-tensor}: That $\id_D \otimes \rho$ extends is immediate from the definition of nuclearity of $\rho$, and nuclearity of $\id_D \otimes \rho$ follows since $\id_E \otimes \id_D \otimes \rho$ extends to a linear map
\[
E \otimes_{\min} (D\otimes A) = (E\otimes D)\otimes_{\min} A \to (E\otimes D) \otimes_{\max} B = E \otimes_{\max} (D\otimes B)
\]
bounded by $\| \rho\|_{\nuc}$ for any $C^\ast$-algebra $E$ by nuclearity of $D$ and $\rho$.

\eqref{l:nuc-psib}: Note that $\theta \colon A \to B$ given by $\theta(x) = b^\ast \psi(x) b$ is both completely positive and nuclear (it is the nuclear map $\psi b$ multiplied by $b^\ast$), and thus $\| \theta\|_{\nuc} \leq \|b\|^2$ by \eqref{l:nuc-cp}. Let $D$ be a non-zero $C^\ast$-algebra and $x = \sum_{j=1}^N a_j \otimes d_j \in A\otimes_{\alg} D$ with minimal tensor norm $\| x\|_{\min} = 1$. Then
\begin{eqnarray}
\| (\psi b  \otimes \id_D) (x) \|_{\max} &=& \|\sum_{j=1}^N \psi(a_j)b \otimes d_j\|_{\max} \nonumber\\
%&=& \| \sum_{i,j=1}^N b^\ast\psi(a_i^\ast a_j) b \otimes d_i^\ast d_j \|_{\max}^{1/2} \nonumber \\
&=& \| \sum_{i,j=1}^N \theta(a_i^\ast a_j) \otimes d_i^\ast d_j \|_{\max}^{1/2} \nonumber\\
&=& \| (\theta \otimes \id_D)(x^\ast x) \|_{\max}^{1/2} \nonumber\\
&\leq& \| \theta\|_\nuc^{1/2} \nonumber \\
&\leq& \| b\|. \nonumber
\end{eqnarray}

\eqref{l:nuc-ideal}: This is an easy consequence of parts \eqref{l:nuc-Banach} and \eqref{l:nuc-psib}.
\end{proof}
\bremark\label{r:weaklynucclassic}
Classically a homomorphism (or completely positive map) $\psi \colon A \to E$ being weakly nuclear relative to a closed ideal $B$ means that $b^\ast \psi b \colon A \to B$ is nuclear for all $b\in B$. We will show that this agrees with our definition above.

If $\psi b$ is nuclear then clearly so is $b^\ast \psi b$ so one implication is obvious. Conversely, suppose $c^\ast \psi c$ is nuclear for all $c\in B$, so that we should show that $\psi b$ is nuclear for all $b\in B$. Let $(e_\lambda)_\lambda$ be an approximate identity in $B$. By Lemma \ref{l:nuc}\eqref{l:nuc-pointnorm} it suffices to show that there is an upper bound on the nuclear norms of the maps $e_\lambda \psi b$. By the polarisation identity we have
\[
e_\lambda \psi b = \frac{1}{4} \sum_{j=0}^3 i^j (i^j e_\lambda + b)^\ast \psi(.) (i^j e_\lambda + b)
\]
and by Lemma \ref{l:nuc}\eqref{l:nuc-cp} we obtain
\[
\| e_\lambda \psi b \|_{\nuc} \leq \frac{1}{4} \sum_{j=0}^3 \| (i^j e_\lambda + b)^\ast \psi(.) (i^j e_\lambda + b)\| \leq (1+\| b\|)^2\| \psi\|.
\]
Hence $\psi b$ is nuclear.
\eremark
If $X$ is a C*-subalgebra of the multiplier algebra $\mathcal M(Y)$, we denote by $\overline{X}^Y$ the hereditary subalgebra $XYX$ of $Y$ generated by $X$ (note that $XYX$ is a $C^\ast$-algebra by the Cohen--Hewitt factorisation theorem).
\begin{proposition}\label{pnuc}
Let $\psi\colon qA \to B$ be a homomorphism. The following are equivalent:
\begin{itemize}
\item[(i)] The map $A \ni x \mapsto \psi(qx) \in B$ is nuclear;
\item[(ii)] The maps $A \to B$ given by $x \mapsto \psi(\iota(x)y)$ and $x \mapsto \psi(\bar \iota(x)y)$ are nuclear for all $y\in qA$;
\item[(iii)] $\psi$ is represented by a prequasihomomorphism
\[
(\psi_1,\psi_2) \colon A \rightrightarrows E \rhd J \hookrightarrow B
\]
where $\psi_1,\psi_2$ are weakly nuclear relative to $J$;
\item[(iv)] If $\psi^\circ \colon QA \to \mathcal M(\overline{\psi(qA)}^B)$ is the canonical extension of $\psi$, then $\psi^\circ \iota$ and $\psi^\circ \bar \iota$ are weakly nuclear.
\item[(v)] If $E = \psi(qA)B$ is considered as a Hilbert $B$-module, the Kasparov module
\[
\left( \psi^\circ \iota \oplus \psi^\circ \bar \iota \colon A \to \mathcal B(E \oplus E^{op}),\scriptsize{ \begin{pmatrix} 0 & 1 \\ 1 & 0 \end{pmatrix}} \right)
\]
is nuclear in the sense of Skandalis.
\end{itemize}
\end{proposition}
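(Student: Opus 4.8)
The plan is to run the cycle $(iii)\Rightarrow(i)\Rightarrow(iv)\Rightarrow(iii)$, supplemented by the easy implications $(iv)\Rightarrow(ii)\Rightarrow(i)$ and by the equivalence $(iv)\Leftrightarrow(v)$; the only step carrying real content is $(i)\Rightarrow(iv)$. Write $D=\overline{\psi(qA)}^B$ for the hereditary subalgebra of $B$ generated by $\psi(qA)$, and let $\psi^\circ\colon QA\to\mathcal M(D)$ be the canonical extension of $\psi$, so that $\psi^\circ(c)\psi(w)=\psi(cw)$ for $c\in QA$, $w\in qA$; one has $\psi(qA)\subseteq D$ and $\overline{\psi(qA)\,D}=D$ (since $\psi(u_\lambda)d\to d$ for an approximate identity $(u_\lambda)$ of $qA$ and $d$ in the dense subset $\psi(qA)B\psi(qA)$ of $D$). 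The permanence properties of nuclear maps in Lemma \ref{l:nuc} are the workhorse throughout. The two implications into $(i)$ are approximate-identity arguments: for $(ii)\Rightarrow(i)$, with an approximate identity $(u_\lambda)$ of $qA$ we have $\psi(qx)=\lim_\lambda(\psi(\iota(x)u_\lambda)-\psi(\bar\iota(x)u_\lambda))$, where for each $\lambda$ the displayed difference is nuclear of nuclear norm $\le 2$ by Lemma \ref{l:nuc}\eqref{l:nuc-Banach},\eqref{l:nuc-psib}, and the convergence is point-norm, so $x\mapsto\psi(qx)$ is nuclear by Lemma \ref{l:nuc}\eqref{l:nuc-pointnorm}; for $(iii)\Rightarrow(i)$, the map $x\mapsto\psi(qx)$ factors as $A\to J\hookrightarrow B$ via $x\mapsto\psi_1(x)-\psi_2(x)$, and the same argument applied with an approximate identity of $J$ (now using weak nuclearity of $\psi_1,\psi_2$ relative to $J$) shows $x\mapsto\psi_1(x)-\psi_2(x)$ is nuclear into $J$, hence into $B$ since $J\hookrightarrow B$ induces a $*$-homomorphism on maximal tensor products.

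For the crux $(i)\Rightarrow(iv)$, I would exploit the Leibniz-type identities of Proposition \ref{gen}: from $q(xz)=\iota(x)q(z)+q(x)\bar\iota(z)=\bar\iota(x)q(z)+q(x)\iota(z)$ one obtains
\[
\psi^\circ\iota(x)\,\psi(qz)=\psi(\iota(x)\,qz)=\psi(q(xz))-\psi(qx)\,\psi^\circ\bar\iota(z),\qquad \psi^\circ\bar\iota(x)\,\psi(qz)=\psi(q(xz))-\psi(qx)\,\psi^\circ\iota(z).
\]
Here $x\mapsto\psi(q(xz))$ is the nuclear map $x\mapsto\psi(qx)$ pre-composed with right multiplication by $z$ on $A$ (completely bounded), while $x\mapsto\psi(qx)\,\psi^\circ\bar\iota(z)$ and $x\mapsto\psi(qx)\,\psi^\circ\iota(z)$ are that nuclear map post-composed with right multiplication by a fixed multiplier of $D$ (weakly decomposable), so by Lemma \ref{l:nuc}\eqref{l:nuc-comp},\eqref{l:nuc-Banach} the maps $x\mapsto\psi^\circ\iota(x)\,\psi(qz)$ and $x\mapsto\psi^\circ\bar\iota(x)\,\psi(qz)$ are nuclear for every $z\in A$. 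By Proposition \ref{gen} the elements $qz$ generate $qA$ as a closed right ideal; applying the continuous linear map $\psi$ and then using $\overline{\psi(qA)\,D}=D$ shows that $\{\psi(qz):z\in A\}\subseteq D$ generates $D$ as a closed right ideal in $\mathcal M(D)$. Hence Lemma \ref{l:nuc}\eqref{l:nuc-ideal}, applied to the homomorphisms $\psi^\circ\iota,\psi^\circ\bar\iota\colon A\to\mathcal M(D)$ with ideal $D$, gives that $\psi^\circ\iota$ and $\psi^\circ\bar\iota$ are weakly nuclear relative to $D$, i.e.\ $(iv)$.

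The remaining implications are bookkeeping. $(iv)\Rightarrow(ii)$: for $y\in qA$ use $\psi(\iota(x)y)=\psi^\circ\iota(x)\,\psi(y)$ with $\psi(y)\in\psi(qA)\subseteq D$, and similarly for $\bar\iota$. $(iv)\Rightarrow(iii)$: the prequasihomomorphism $\psi^\circ\iota,\psi^\circ\bar\iota\colon A\rightrightarrows\mathcal M(D)\rhd D\hookrightarrow B$ represents $\psi$ — its difference map is $x\mapsto\psi(qx)\in D$ and its restriction to $qA$ is $\psi$ — and $\psi^\circ\iota,\psi^\circ\bar\iota$ are weakly nuclear relative to $J=D$ by $(iv)$. $(iv)\Leftrightarrow(v)$: for the Hilbert $B$-module $E=\psi(qA)B$ one computes $\mathcal K(E)=\overline{\psi(qA)B\psi(qA)}=D$ acting by left multiplication, whence $\mathcal K(E\oplus E^{op})=M_2(D)$ and $\mathcal B(E\oplus E^{op})=M_2(\mathcal M(D))$; the commutator of the action $\psi^\circ\iota\oplus\psi^\circ\bar\iota$ with the flip has off-diagonal entries $\pm\psi(qa)\in D$, so the pair is a Kasparov module, and its nuclearity in the sense of Skandalis is by definition weak nuclearity of the diagonal action $\psi^\circ\iota\oplus\psi^\circ\bar\iota$ relative to $M_2(D)$ — which, by compressing to the matrix corners (completely positive, hence weakly decomposable maps) together with Lemma \ref{l:nuc}\eqref{l:nuc-Banach},\eqref{l:nuc-comp}, is equivalent to weak nuclearity of each of $\psi^\circ\iota$ and $\psi^\circ\bar\iota$ relative to $D$, i.e.\ $(iv)$.

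The main obstacle is $(i)\Rightarrow(iv)$: passing from nuclearity of the single generator map $x\mapsto\psi(qx)$ to nuclearity of all the partial maps $x\mapsto\psi(\iota(x)y)$. Everything there hinges on the Leibniz identities of Proposition \ref{gen}, which re-express $\psi(\iota(x)\,qz)$ through $\psi(qx)$, together with the permanence properties of nuclear maps in Lemma \ref{l:nuc} — above all part \eqref{l:nuc-ideal}, which is what propagates nuclearity from the generators $qz$ to all of $D$. Everything else is the prequasihomomorphism $\leftrightarrow$ homomorphism dictionary and the identification $\mathcal K(\psi(qA)B)\cong\overline{\psi(qA)}^B$.
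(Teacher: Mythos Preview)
Your proof is correct and follows essentially the same route as the paper's. The crucial implication $(i)\Rightarrow(iv)$ is handled identically in both: use the Leibniz identity from Proposition~\ref{gen} to rewrite $\psi^\circ\iota(x)\,\psi(qz)$ in terms of the generator map $x\mapsto\psi(qx)$, invoke Lemma~\ref{l:nuc}\eqref{l:nuc-comp} for the compositions, and finish with Lemma~\ref{l:nuc}\eqref{l:nuc-ideal} using that $\{\psi(qz)\}$ generates $D$ as a one-sided ideal. The only cosmetic differences are that the paper runs the cycle as $(iv)\Rightarrow(iii)\Rightarrow(ii)\Rightarrow(i)\Rightarrow(iv)$ whereas you also insert a direct $(iii)\Rightarrow(i)$; and in $(ii)\Rightarrow(i)$ the paper bounds the nuclear norms via the completely positive maps $x\mapsto\psi(y^\ast\iota(x)y)$ and Lemma~\ref{l:nuc}\eqref{l:nuc-cp}, while you appeal directly to Lemma~\ref{l:nuc}\eqref{l:nuc-psib} --- both yield the uniform bound $2$. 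For $(iv)\Leftrightarrow(v)$ the paper simply cites \cite[1.5]{SkandKKnuc}, which encapsulates the computation $\mathcal K(E)\cong D$ that you spell out.
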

\begin{proof}
With $E$ as in (v), $\mathcal B(E)$ is canonically isomorphic to $\mathcal M(\overline{\psi(qA)}^B)$ and hence (iv) and (v) are equivalent by \cite[1.5]{SkandKKnuc}.

(iv) implies (iii) is immediate since $\psi$ is induced by
\[
(\psi^\circ \iota, \psi^\circ \bar \iota ) \colon A \rightrightarrows \mathcal M(\overline{\psi(qA)}^B) \rhd \overline{\psi(qA)}^B \hookrightarrow B.
\]
For (iii) $\Rightarrow$ (ii) we have $x\mapsto \psi(\iota(x)y) = \psi_1(x)\psi(y)$ is nuclear for all $y\in qA$, and similarly $x\mapsto \psi(\bar \iota(x)y)$ is nuclear.

For (ii) $\Rightarrow$ (i), let for $y\in qA$ $\psi_y, \bar \psi_y\colon A \to B$ be the completely positive maps given by $\psi_y(x) = \psi(y^\ast \iota(x) y)$ and $\bar \psi_y(x) = \psi(y^\ast \bar \iota(x) y)$ which are nuclear by (ii). As these maps are completely positive, their nuclear norm $\| \psi_y\|_{\nuc} = \| \psi_y \| \leq \|y\|^2$ (Lemma \ref{l:nuc}\eqref{l:nuc-cp}), and similarly $\|\bar \psi_y\|_\nuc \leq \|y\|^2$. Hence
\[
 x \mapsto \psi(y^\ast \, qx \, y) = \psi_y(x) - \bar \psi_y(x)
\]
has nuclear norm bounded by $2\| y\|^2$. Letting $y$ range through an approximate identity for $qA$, these nuclear maps converge point-norm to $x\mapsto \psi(qx)$ and have nuclear norm bounded by $2$, so $\| x \mapsto \psi(qx)\|_\nuc \leq 2$ by Lemma \ref{l:nuc}\eqref{l:nuc-pointnorm}.

(i) $\Rightarrow$ (iv):  By  Proposition \ref{gen}, $\overline{\psi(qA)}^B$ is generated as a closed left ideal by $\{ \psi(qa): a\in A\}$. So to check that $\psi^\circ \iota$ is weakly nuclear it suffices by Lemma \ref{l:nuc}\eqref{l:nuc-ideal} to check that
\[
x \mapsto \psi^\circ \iota(x) \, \psi(qa) = \psi(\iota(x) qa) \stackrel{2.1}{=} \psi(q(xa)) - \psi(q(x)) \psi^\circ \bar \iota(a)
\]
is nuclear, which holds by Lemma \ref{l:nuc}\eqref{l:nuc-comp} (applied to the weakly decomposable maps given by right multiplication by a fixed element). Similarly $\psi^\circ \bar \iota$ is weakly nuclear.
\end{proof}

\begin{definition}
We say that a homomorphism $\psi \colon qA \to B$ is \emph{$q$-nuclear} if it satisfies the equivalent conditions in the above proposition.
\end{definition}

\bdefin
We define $KK^{nuc}(A,B)$ as the abelian group $[qA,\cK\otimes B]_\nuc$ of homotopy  classes (in the same category of maps) of $q$-nuclear homomorphisms $qA\to \cK\otimes B$.
\edefin
\bremark
The definition of $KK^{nuc}(A,B)$ from \cite{SkandKKnuc} for $A$ separable and $B$ $\sigma$-unital uses the original definition of Kasparov but assuming all Kasparov modules and homotopies are nuclear. The argument from \cite{CuGen} combined with Proposition \ref{pnuc} shows that the obvious map from Skandalis' $KK^{nuc}$-group to $[qA, \cK \otimes B]_\nuc$ is an isomorphism. This map, in particular, takes a Kasparov module induced by a prequasihomomorphism as in Proposition \ref{pnuc}(iii) (with $\cK \otimes B$ instead of $B$) to the induced $q$-nuclear homomorphism $\phi \colon qA \to \cK \otimes B$.
\eremark
\bremark
A C*-algebra $A$ is $K$-nuclear in the sense of Skandalis, if and only if the natural projection $\pi_A: qA\to A$ composed with the inclusion $A\to \cK\otimes A$ is homotopic to a $q$-nuclear homomorphism $qA\to \cK\otimes A$.
\eremark
We now discuss the product of elements in $KK^{nuc}$ by elements in $KK$. We want to see that our formula  in Subsection \ref{sub prod} for the product of two $KK$-elements represented by $\rho:qA\to \cK\otimes B$ and $\psi :qB\to \cK\otimes C$  gives a well defined element in $KK^{nuc}(A,C)$ if $\rho$ or $\psi$ is $q$-nuclear. The product, as we defined it, depends only on the restriction of $\psi$ to $q(\rho (qA))$. But if $\rho:qA\to B$ is $q$-nuclear then we don't know if $\rho:qA\to \rho (qA)$ is too. Therefore we apply the formula for the product from Section \ref{s3} to the corestrictions/restrictions $\rho_0: qA\to B_0$ and $\psi_0: qB_0\to C_0$ of $\rho$ and $\psi$, where $B_0=\overline{\rho (qA)}^B$, and $C_0 =\overline{\psi (qB_0)}^C$ are the hereditary subalgebras generated by $\rho (qA)$ and $\psi (qB_0)$. Then $\rho_0$ is $q$-nuclear iff $\rho$ is and $\rho=j_{B_0} \circ \rho_0$ for the embedding $j_{B_0}: B_0\to \cK\otimes B$ (and the same for $\psi$ and $\psi_0$). Similarly we denote by $(\psi_0\sharp \rho_0)_0$ the corestriction of $\psi_0\sharp \rho_0$ to the hereditary subalgebra $C_0$ generated by the image of $\psi_0\sharp \rho_0$. The product in $KK$ without nuclearity condition of $\psi$ and $\rho$ will be the same as the product $(\psi_0\sharp\rho_0)_0$ composed with the embedding $j_{C_0}: C_0 \hookrightarrow \cK\otimes C$ (see Remark \ref{remrest} (b)). We call $\rho_0, \psi_0$ the completed form of $\rho,\psi$ and $(\psi_0\sharp \rho_0)_0$ the completed product.\\
We consider the two maps $\eta^\psi,\bar{\eta}^\psi: B_0\to \cM(C_0)$ given by $\eta^\psi =\psi_0^\circ \iota_{B_0},\,\bar{\eta}^\psi=\psi_0^\circ \bar{\iota}_{B_0}$ (with $\iota_{B_0},\bar{\iota}_{B_0}:B_0\to QB_0$ the natural inclusions) and set $R^\psi_1=\eta^\psi (B_0)$, $R^\psi_2 =\bar{\eta}^\psi (B_0)$ and let $R^\psi$ be the C*-algebra generated in $M_2(\cM(C_0))$ by the matrices in
$$\left(\begin{matrix}
 R^\psi_1 & R^\psi_1R^\psi_2\\
 R^\psi_2R^\psi_1 & R^\psi_2
\end{matrix}\right)$$
We also denote by $J_0$ the intersection of $R^\psi$ with $M_2(C_0)$.\\
We can extend $\eta^\psi,\bar{\eta}^\psi$ to maps from the multipliers of $B_0$ to the multipliers of $R^\psi_1,R^\psi_2$ respectively. By composing these extended maps with the natural maps $\ve^\rho,\bar{\ve}^\rho:A\to \cM(B_0)$ (given by $\rho_0^\circ\iota$ and $\rho_0^\circ\bar{\iota}$) we obtain maps $\eta^\psi\ve^\rho,\eta^\psi\bar{\ve}^\rho:A\to \cM (R^\psi_1)$ and $\bar{\eta}^\psi\ve^\rho,\bar{\eta}^\psi\bar{\ve}^\rho:A\to \cM (R^\psi_2)$.\\
This means that the maps
$$
h_1^{\psi\rho} = \left(\begin{matrix}
 \eta^\psi\ve^\rho &0\\
 0 & \bar{\eta}^\psi\bar{\ve}^\rho
\end{matrix}\right) \qquad
h_2^{\psi\rho} = \left(\begin{matrix}
 \eta^\psi\bar{\ve}^\rho &0\\
 0 & \bar{\eta}^\psi\ve^\rho
\end{matrix}\right)
$$
are homomorphisms from $A$ to the multipliers of $R^\psi$.

\blemma\label{lem nuc}
If $\rho$ or $\psi$ is $q$-nuclear, then $h_1^{\psi\rho}$ and  $h_2^{\psi\rho}$ are weakly nuclear relative to $J_0$.
\elemma
\bproof
Assume that $\rho$ is weakly nuclear. Then the map $A\ni x\mapsto v\ve^\rho(x)v^*$ is nuclear for each $v\in B_0$ and the same for $\bar{\ve}^\rho$. If we apply $\eta^\psi$ to this map we see that $A\ni x\mapsto w\eta^\psi\ve^\rho(x)w^*$ is nuclear for each $w\in \eta^\psi(B_0)$.  If we multiply $w$ in this map by $y\in C_0$ on the left we find that $A\ni x\mapsto yw\eta^\psi\ve^\rho(x)w^*y^*$ is nuclear for each $w\in \eta^\psi(B_0)$ and $y\in C_0$ and the same for $\bar{\eta}^\psi$ and $\bar{\ve}^\rho$ in place of $\eta^\psi$ and/or $\ve^\rho$. By matrix multiplication this shows that the maps $A\ni x \mapsto zh_i^{\psi\rho}z^*$ are nuclear for $i=1,2$ and each $z\in J_0$.\\
Assume now that $\psi$ is $q$-nuclear.\\
If $(u_\lambda)$ is an approximate unit for $B_0$, then, by the special definition of $R^{\psi}$, we have that $zh^{\psi\rho}_1(u_\lambda)$ and $zh^{\psi\rho}_2(u_\lambda)$ tend to $z$ for each $z\in R^{\psi}$.\\
By $q$-nuclearity of $\psi$, for each $z\in J_0$ the map $A\ni x\mapsto z \eta^{\psi}(u_\lambda \ve^\rho (x) u_\lambda ^*)z^*$ is nuclear for each $\lambda$ and the same for $\bar{\eta}^{\psi}$ and $\bar{\ve}^\rho$. In the limit over $\lambda$ we get that the map $A\ni x\mapsto z \eta^\psi\ve^\rho(x)z^*$ is nuclear as well (as the set of nuclear c.p.~maps is point-norm closed) as the corresponding maps with $\eta^\psi$ and $\ve^\rho$ replaced with $\bar{\eta}^\psi$ and/or $\bar{\ve}^\rho$. This shows that for $i=1,2$ and $y\in J_0$ the maps $A\ni x \mapsto yh_i^{\psi\rho} (x)y^*$ are nuclear and thus that $h^{\psi\rho}_1,h^{\psi\rho}_2$ are weakly nuclear relative to $J_0$.
\eproof
We now examine the product of the bivariant elements represented by $\rho_0$ and $\psi_0$. As in the universal case we have that $R^\psi/J_0\cong M_2(B_0)$ and we can lift the multiplier $\left(\scriptsize{\begin{matrix}
 0 & 1\\
 1 & 0
\end{matrix}}\right)$ to a multiplier $S_0$ of $J_0$ that commutes mod $J_0$ with $\eta\ve (x)\oplus\bar{\eta}\ve (x)$ for $x \in A$. We set $F_0=e^{\frac{\pi i}{2} S_0}$ and $\sigma^\psi_t=\Ad e^{\frac{\pi i}{2} S_0 }$ and $\sigma^\psi = \sigma^\psi_1$. If $h_2^{\psi\rho}$ is weakly nuclear relative to $J_0$, so is the composition $\sigma^\psi h_2^{\psi\rho}$. The homomorphism $(\psi_0\,\sharp\,\rho_0)_0 = q(h_1^{\psi\rho},\sigma^\psi h_2^{\psi\rho}):qA\to M_2(C_0)$ represents the product and
defines an element of $KK(A,C_0)$ which, by Lemma \ref{lem nuc}, is $q$-nuclear whenever $\rho$ or $\psi$ is. We get
\bprop\label{prop nuc}
The pairing $(\psi_0,\rho_0)\mapsto j_{C_0}(\psi_0\,\sharp\,\rho_0)_0$ induces well defined bilinear products $KK^{nuc}(A,B)\times KK(B,C)\to KK^{nuc}(A,C)$ and\\ $KK(A,B)\times KK^{nuc}(B,C)\to KK^{nuc}(A,C)$.
\eprop
\bproof
The product $j_{C_0}\circ \psi_0\,\sharp\,\rho_0$ represents an element of $KK^{nuc}(A,C)$ by Lemma \ref{lem nuc} and the discussion after the lemma. It is well defined since $q$-nuclear homotopies on the side of $[qA,\cK\otimes B_0]_\nuc$ or $[qB_0,\cK\otimes C_0]_\nuc$ induce elements of $KK^{nuc}(A,B_0[0,1])$ or $KK^{nuc}(B_0,C_0[0,1])$. The product with such an element gives $q$-nuclear homotopies of the product.
\eproof
\subsection{Associativity}
Assume that we have elements in $KK(A,B)$, $KK(B,C)$, $KK(C,D)$ represented by homomorphisms $\alpha: qA\to \cK\otimes B$, $\beta: qB \to \cK\otimes C$, $\gamma: qC\to \cK\otimes D$ and assume that one of those is $q$-nuclear. In order to show that the two different products $\gamma\,\sharp\, (\beta \sharp \alpha)$ and $(\gamma\sharp\beta )\, \sharp\, \alpha$ are homotopic via a $q$-nuclear homotopy and are themselves both $q$-nuclear we can proceed exactly as in subsection \ref{sua}. Using the notation from there we obtain modified homomorphisms $\alpha, \beta', \gamma'$. By Proposition \ref{prop nuc}, $\beta',\gamma'$ will be $q$-nuclear if $\beta$ resp. $\gamma$ is. According to subsection \ref{sua} the product is given for both choices of parentheses by the homomorphism $qA\to D_0\subset \cK\otimes D$ given by
$$q(\gamma'_E\beta'_E\alpha_E\oplus\bar{\gamma}'_E\bar{\beta}'_E\alpha_E\oplus
\gamma'_E\bar{\beta}'_E\bar{\alpha}_E\oplus\bar{\gamma}'_E\beta'_E\bar{\alpha}_E \,,\,
\bar{\gamma}'_E\beta'_E\alpha_E\oplus\gamma'_E\bar{\beta}'_E\alpha_E\oplus
\bar{\gamma}'_E\bar{\beta}'_E \bar{\alpha}_E\oplus\gamma'_E\beta'_E\bar{\alpha}_E)
$$
It is $q$-nuclear by Proposition \ref{prop nuc}.
\bremark
(a)In the situation above it follows from Proposition \ref{prop nuc} that the two products with different choice of parentheses are $q$-nuclear, if one of the $\alpha,\beta,\gamma$ is. But if we have already established that the product is given by the long expression above and that $\beta'$ or $\gamma'$ is $q$-nuclear once $\beta$ or $\gamma$ is $q$-nuclear, then the $q$-nuclearity of the product is obvious. In fact we get the chain of ideals
$$\gamma_E'\beta_E'\alpha_E A \,\triangleright\, \gamma_E'\beta_E'B_0 \,\triangleright\, \gamma_E'C_0 \,\triangleright\, D_0
$$
and an analogous chain of ideals for each composition $\gamma'_E\beta'_E\alpha_E, \bar{\gamma}'_E\bar{\beta}'_E\alpha_E\,\ldots $. This shows that each of these compositions is weakly nuclear relative to $D_0$ as soon as one of the $\alpha,\beta,\gamma$ is $q$-nuclear.\\
(b) For the proof of associativity of the product in $KK^{nuc}$ we could also adapt the arguments from subsection \ref{ass} or from \cite{CuKK}, but the proof in subsection                        \ref{sua} is particularly well suited for the situation in $KK^{nuc}$.
\eremark

\section{The equivariant case}\label{sequi}
Let $G$ be a locally compact $\sigma$-compact group. A $G$-C*-algebra is a C*-algebra with an action of $G$ by automorphisms $\alpha_g,g\in G$ such that the map $G\ni g\mapsto \alpha_g(x)$ is continuous for each $x\in A$. We denote by $\cK=\cK_\Nz$ the algebra of compact operators on $\ell^2 \Nz$ and by $\cK_G$ the algebra $\cK(L^2 G)$ of compact operators on $L^2 G$. They are $G$-algebras with the trivial action and with the adjoint action $\Ad \lambda$ of $G$, respectively, where $\lambda \colon G \to \mathcal U(L^2G)$ is the left regular representation. We also denote by $\cK_{\Nz G}$ their tensor product with the tensor product action and will later use the fact that $\cK_{\Nz G}$ is equivariantly isomorphic to $\cK_{\Nz G}\otimes \cK_{\Nz G}$ (by Fell's absorption principle the tensor product of $\lambda$ by any unitary representation of $G$ is equivalent to a multiple of $\lambda$).

Given a $G$-C*-algebra $(A,\alpha)$ we consider the Hilbert $A$-module $L^2(G,A)$ with the natural action of $G$ given by $\lambda \alpha$ where $\lambda$ is the action by translation on $G$. The algebra of compact operators on $L^2(G,A)$ in the sense of Kasparov is isomorphic to $\cK_G\otimes A$. The induced action of $G$ on $\cK_G\otimes A$ is $\Ad \lambda\otimes \alpha$.

Since $A\mapsto QA$ is a functor, the action $\alpha$ induces actions of $G$ on $QA, qA$ and on $Q^2A, q^2A, R, J$ (see Section \ref{s3}) which we still denote by $\alpha$.

\bdefin\label{d1}
Given $G$-C*-algebras $(A,\alpha)$ and $(B,\beta)$ where $A$ is separable, define $KK^G(A,B)$ as the set of homotopy classes (in the category of equivariant homomorphisms) of equivariant *-homomorphisms from $\cK_{\Nz G}\otimes q(\cK_{\Nz G}\otimes A)$ to $\cK_{\Nz G}\otimes B$.
\edefin
\bremark
(a) The pair of homomorphisms $(\id\otimes \iota, id\otimes \bar{\iota})$ gives an equivariant homomorphism from $q(\cK_{\Nz G}\otimes A)$ to $\cK_{\Nz G}\otimes qA$. Therefore every equivariant homomorphism $qA \to \cK_{\Nz G}\otimes B$ (or equivalently every equivariant prequasihomomorphism $A\to \cK_{\Nz G}\otimes B$) induces by stabilization an element of $KK^G(A,B)$.\\
(b) It is a consequence of Definition \ref{d1} that the so defined $KK^G$ is the universal functor satisfying the usual properties of homotopy invariance, stability and split exactness, see Section \ref{suni}. Using the characterization of $KK^G$ by these properties in \cite{Thoms} our $KK^G$ is the same as the one of Kasparov \cite{KasInv}. Ralf Meyer has shown in \cite{MeyEqui} by direct comparison that Definition \ref{d1} gives the same functor as the one of \cite{KasInv}.\\
(c)  Using Meyer's result our construction of the product below gives an alternative definition of the product in Kasparov's $KK^G$.
\eremark
In order to describe the composition product for $KK^G$ we will use an equivariant version of the map $\vp_A$ in Section \ref{s3} this time from $q(\cK_{\Nz G}\otimes A)$ to $M_2(q^2(\cK_{\Nz G}\otimes A))$. As a first step we are now going to construct an equivariant map $\vp_0$ from $q(\cK_G\otimes A)$ to $M_2(\cK_G\otimes q^2A)$.

We consider first, as in Section 1, the algebras
$$R=\left(\begin{matrix}
 R_1 & R_1R_2\\
 R_2R_1 & R_2
\end{matrix}\right) \qquad D = C^* \left\{\left(\begin{matrix}
 \eta\ve (x) & 0\\
 0 & \bar{\eta}\ve (x)
\end{matrix}\right)\quad x\in A\right\} $$
where $R_1=\eta(qA)$, $R_2=\bar{\eta}(qA)$
as well as the ideal $J=R\cap M_2(q^2 A)$.

As in Section \ref{s3} we have that $(R+\!D)/J$ is isomorphic to the subalgebra of $M_2(Q(A))$ generated by $M_2(qA)$ together with the matrices
$$\left(\begin{matrix}
 \iota (x) & 0\\
 0 & \iota (x)
\end{matrix}\right) \quad x\in A.$$
Using the equivariant version of Proposition \ref{eqT} (Thomsen's noncommutative Tietze extension theorem) we can lift the multiplier $S_0= \left(\begin{matrix}
 0 & 1\\
 1 & 0
\end{matrix}\right)$ of $R/J$ to a self-adjoint multiplier $S$ of $J$ that commutes mod $J$ with $D$ and which satisfies $\alpha_g(S)-S\in J$ for all $g\in G$.

This multiplier $S$ can be extended to a $G$-invariant self-adjoint element $S'$ of $\cB(L^2(G,J))$ by setting $S'(\xi )(s)=S_s \xi (s)$ for $s\in G$ where $S_s=\alpha_s(S)=\alpha_s S\alpha_s^{-1}$ and where $\xi\in C_c(G,A)\subset L^2 (G,A)$. It is immediate that $S'$ is invariant for the action $\lambda\alpha$ of $G$ on $L^2(G,J)$. Thus $S'$ defines a $G$-invariant multiplier of $\cK_G\otimes J$.

The important point now is that moreover $S'$ commutes mod $\cK_G\otimes J$ with $D'=\cK_G\otimes D$.
In fact, for a typical rank 1 element of the form $|\,f_1\,\rangle \,\langle\, f_2\,|$ in $\cK_G$ with $f_1,f_2\in C_c(G,\Cz)$, $x\in D$ and $\xi\in C_c(G,J)\subset L^2(G,J)$ we get

\bglnoz\left(\left[ S',(|f_1\rangle \,\langle f_2|\otimes x)\right]\,\xi\right)(s) =f_1(s)\int (\overline{f_2(t)}(S_sx-xS_t))\xi (t)dt\qquad\qquad\qquad\qquad\\
 \qquad\quad= f_1(s)\int (\overline{f_2(t)}(S_sx-S_tx))\xi (t)dt - f_1(s)\int (\overline{f_2(t)}(S_tx-xS_t))\xi (t)dt
\eglnoz
where $S_t x-xS_t$, $S_s x-S_tx$ are in $J$ and continuous in $t$. In fact, $S$ was chosen, using \ref{eqT} to commute mod $J$ with $D$ and such that $S_s-S,\, S_t-S$ are in $J$ and continuous in $s,t$.

As in Section \ref{s3} we can now choose $F'=e^{\frac{\pi i}{2} S'}$.
Then $\Ad F'$ defines an automorphism $\sigma'$ of the multipliers of $\cK_G\otimes J$. Tensoring by $\id_{\cK_G}$ we extend the maps $\eta\ve, \eta\bar{\ve}, \bar{\eta}\ve, \bar{\eta}\bar{\ve}:A\to Q^2 A$ to homomorphisms from $\cK_G\otimes A$ to $\cK_G\otimes Q^2A$, still denoted by $\eta\ve, \eta\bar{\ve}, \bar{\eta}\ve, \bar{\eta}\bar{\ve}$.
Then the pair of homomorphisms
$$\left(\left(\begin{matrix}
 \eta\ve &0\\
 0 & \bar{\eta}\bar{\ve}
\end{matrix}\right),\sigma'\left(\begin{matrix}
 \bar{\eta}\ve &0\\
 0 & \eta\bar{\ve}
\end{matrix}\right)\right)$$
defines an equivariant homomorphism $\varphi_0: q(\cK_G\otimes A)$ to $\cK_G\otimes J$ (note that, by definition of $R$, both $\left(\begin{matrix}
 \eta\ve &0\\
 0 & \bar{\eta}\bar{\ve}
\end{matrix}\right)$ and $\left(\begin{matrix}
 \bar{\eta}\ve &0\\
 0 & \eta\bar{\ve}
\end{matrix}\right)$ map $\cK_{G}\otimes A$ to the multipliers of $\cK_{G}\otimes R$).

We can now stabilize the algebras involved in the definition of $\varphi_0 $ by $\cK_{\Nz G}$. Setting $A'=\cK_{\Nz G}\otimes A$ and using the fact that $\cK_{\Nz G}\otimes \cK_{\Nz G}\cong \cK_{\Nz G}$ we obtain the stabilized equivariant map
$$\vp'_A:\cK_{\Nz G}\otimes qA' \to \cK_{\Nz G}\otimes J'$$
where $J'=R'\cap q^2(A')$.
As in the non-equivariant case, the map $\varphi'_{A}$ induces the associative product $KK^G(A,B)\times KK^G(B,C)\to KK^G(A,C)$ as follows: \hspace{1mm}let elements of $KK^G(A,B)$ and of $KK^G(B,C)$ be represented by equivariant maps $$\cK_{\Nz G}\otimes q(\cK_{\Nz G}\otimes A)\stackrel{\mu}{\to} \cK_{\Nz G}\otimes B \quad\textrm{and}\quad \cK_{\Nz G}\otimes q(\cK_{\Nz G}\otimes B)\stackrel{\nu}{\to} \cK_{\Nz G}\otimes C$$ respectively.
Using the fact that $\cK_{\Nz G}\cong\cK_{\Nz G}\otimes\cK_{\Nz G}$, we get a map
$$q^2(\cK_{\Nz G}\otimes A)\cong q^2(\cK_{\Nz G}\otimes \cK_{\Nz G}\otimes A)\stackrel{\kappa}{\to} q(\cK_{\Nz G} \otimes q(\cK_{\Nz G}\otimes A)$$
and, using this, we can form the following composition
\bglnoz \cK_{\Nz G}\otimes q(\cK_{\Nz G}\otimes A)\stackrel{\vp'_A}{\lori}\cK_{\Nz G}\otimes q^2(\cK_{\Nz G}\otimes A) \stackrel{\kappa}{\to} \cK_{\Nz G}\otimes q(\cK_{\Nz G} \otimes q(\cK_{\Nz G}\otimes A))\\ \hfill \stackrel{\id\otimes q(\mu)}{\lori}\; \cK_{\Nz G}\otimes q(\cK_{\Nz G}\otimes B)\stackrel{\nu}{\to} \cK_{\Nz G}\otimes C\quad\eglnoz
which represents the product in $KK^G(A,C)$.
\subsection{Associativity} Associativity of the product in $KK^G$ follows as in Subsection \ref{ass} since all the isomorphisms and homotopies used there are manifestly $G$-equivariant once the automorphisms $\sigma_t$ are chosen to be equivariant.

\section{Universality and connection to the usual definitions}\label{suni}
We show now that the functors $KK(X;\, \cdot\,)$ and $KK^G$ that we have studied in Sections \ref{id} and \ref{sequi} are characterized - just like ordinary $KK$ - by split exactness together with homotopy invariance and stability in their respective category. It seems that $KK^{nuc}$ could also be characterized by a suitable more involved split exactness property for exact sequences with a weakly nuclear splitting. We leave that open - partly also because we think that such a characterization would be of minor interest.\\ Split exactness on equivariant, equivariantly split exact sequences does in fact follow for the functors $KK(X;\, \cdot\,)$ and $KK^G$ that we have studied in Sections \ref{id} and \ref{sequi} from the existence of the product, by the simple argument in \cite[2.1]{CuKK}.

\subsection{The case of ideal related $KK$-theory.}
Let $X$ be a topological space.
\bprop\label{psuni} $KK(X;\,\cdot\,,\,\cdot\,)$ is the universal functor from the category of separable C*-algebras with an action of $X$ to an additive category which is stable, homotopy invariant and split exact on exact sequences of algebras in the category with an $X$-equivariant homomorphism splitting.\eprop
\bproof
Given a C*-algebra $A$ with an action of $X$, consider the exact sequence
$$0\to q_XA \to Q_XA\to A\to 0$$ with the equivariant splitting $\iota: A\to Q_XA$. The usual argument showing that a free product of C*-algebras is $KK$-equivalent to the direct sum (see \cite{CuKK} Proposition 3.1) is compatible with the action of $X$, so that $Q_XA$ is equivalent in $KK(X;\cdot,\cdot)$ to $A\oplus A$ with the natural action of $X$ - just by homotopy invariance and stability. Let now $F$ be a functor from the category of separable C*-algebras with an $X$-action to an additive category which is stable, homotopy invariant and equivariantly split exact. Then $F(Q_XA)$ is isomorphic, via the natural map, to $F(A\oplus A)=F(A)\oplus F(A)$ and by split exactness consequently $F(q_XA)\cong F(A)$. By Definition \ref{dKKX} every element of $KK(X;A,B)$ is represented by an $X$-equivariant homomorphism $q_XA\to \cK\otimes B$. Applying $F$ to the homotopy class of such a homomorphism we get a morphism $F(A)\cong F(q_XA)\to F(\cK\otimes B)\cong F(B)$. Since the isomorphisms involved are natural this morphism is uniquely determined.\\
Conversely $KK(X;\cdot\,)$ is homotopy invariant, stable and splits on $X$-equi\-variant\-ly split exact sequences.
\eproof
\subsection{The case of $KK^G$.}\label{uniG}
If $G$ is a locally compact $\sigma$-compact group we also have
\bprop (cf.\cite{MeyEqui})
$KK^G$ is the universal functor on the category of separable $G$-C*-algebras which is homotopy invariant, stable under tensor product by $\cK_{\Nz G}$ and split exact on extensions $0\to I\to E\to A\to 0$ of $G$-C*-algebras with an equivariant splitting homomorphism $s:A\to E$.
\eprop
\bproof
Let $F$ be a functor with the given properties from the category of $G$-C*-algebras to an additive category and set $A'=\cK_{\Nz G}\otimes A$. Homotopy invariance and stability of $F$ imply that $F(QA')\cong F(A'\oplus A')$ (by the argument in \cite{CuKK} Proposition 3.1 which is compatible with the action of $G$). Split exactness implies that $F(QA')\cong (F(qA')\oplus F(A')$ and finally that $F(qA')\cong F(A')$ naturally. Since also $F(A')\cong F(A)$ for all $A$ by stability, the assertion then follows from the definition of $KK^G$, see \ref{d1}.\\
Conversely, $KK^G$ is equivariantly split exact by the remark at the beginning of the section.
\eproof
\subsection{Connection to the usual definitions}
The usual definitions of the different versions of $KK(A,B)$ are based on $A$-$B$ Kasparov modules $(E,F)$ with additional structure. In such a Kasparov module one can always assume that $F=F^*$ and $F^2=1$. Conjugation of the (first component for the $\Zz/2$-grading of the) left action $\vp$ of $A$ on $E$ by $F$ gives a second homomorphism $\bar{\vp}:A \to \cB(E)$. Depending on the situation, $\vp$ will `weakly' respect the additional structure ($X$-equivariance, $G$-equivariance or nuclearity respectively). Now in order to get a homomorphism from $qA$ to $\cK(E)$ respecting the additional structure we need to know that $\bar{\vp}$ also respects the structure `weakly'. Since $\bar{\vp} =\Ad F \vp$, and $\Ad F$ is inner, this is automatic for $X$-equivariance. In the case of $KK^G$ this has been established in the paper by Ralf Meyer. In the case of $KK^{nuc}$ the equivalence between $q$-nuclear homomorphisms $qA\to \cK(E)$ and nuclear Kasparov modules has been shown in Proposition \ref{pnuc}. In the case of $KK^G$ and $KK(X)$ we get the equivalence then from the universality of our definition.

\end{document}